\numberwithin{equation}{section}
\newtheorem{thm}[equation]{Theorem}
\newtheorem{prop}[equation]{Proposition}
\newtheorem{proposition}[equation]{Proposition}
\newtheorem{lem}[equation]{Lemma}
\newtheorem{cor}[equation]{Corollary}
\newtheorem{conjecture}[equation]{Conjecture}
\newtheorem{condition}[equation]{Condition}
\theoremstyle{definition}
\newtheorem{definition}[equation]{Definition}
\newtheorem{rmq}[equation]{Remark}
\newtheorem{exple}[equation]{Example}
\newcommand\Red{{\mathrm{Red}_T}}
\newcommand\lt{{\ell_T}}
\newcommand\rk{{\mathrm{rank}}}
\newcommand\Ref{{\mathrm{Ref}}}
\title[On cycle decompositions in Coxeter groups]{On cycle decompositions in Coxeter groups}
\author{Thomas Gobet}
\thanks{The author is funded by the ANR Geolie ANR-15-CE40-0012}
\address{Institut Elie Cartan de Lorraine, Universit\'e de Lorraine, B.P. 70239, F-54506 Vandoeuvre-l\`es-Nancy Cedex.}
\begin{document}

\begin{abstract}
The aim of this note is to show that the cycle decomposition of elements of the symmetric group admits a quite natural formulation in the framework of dual Coxeter theory, allowing a generalization of it to the family of so-called \textit{parabolic quasi-Coxeter elements} of Coxeter groups (in the symmetric group every element is a parabolic quasi-Coxeter element). We show that such an element admits an analogue of the cycle decomposition. Elements which are not in this family still admit a generalized cycle decomposition, but it is not unique in general.
\end{abstract}

\maketitle

\section{Introduction}

The cycle decomposition in the symmetric group is a powerful combinatorial tool to study properties of permutations. On the other hand, the symmetric groups can be realized as Coxeter groups. It is easy for example to determine the order of an element from its cycle decomposition, hence even if we prefer to view the symmetric groups as Coxeter groups it is sometimes useful to represent their elements as permutations and make use of their unique cycle decomposition, rather than using Coxeter theoretic representations of the elements as words in the simple generating set.

It therefore appears as natural to wonder whether the cycle decomposition admits a natural generalization to Coxeter groups. However, when trying to define cycle decompositions in the symmetric group purely in terms of the classical Coxeter theoretic data, one rapidly sees an obstruction towards such a generalization: considering a Coxeter system $(W,S)$ of type $A_n$ (with $W$ identified with $\mathfrak{S}_{n+1}$ and $S$ with the set of simple transpositions) and an element $w\in W$ with cycle decomposition $w=c_1 c_2 \cdots c_k$, the Coxeter length $\ell_S(w)$ of $w$ is not equal in general to the sums of the lengths of the various $c_i$. 

However, replacing the generating simple set $S$ by the set $T$ of all transpositions and the classical length by the length function $\ell_T$ on $W$ with respect to $T$, one has that $\ell_T(w)=\sum_{i=1}^k \ell_T(c_i).$
The set of transpositions forms a single conjugacy class. From a Coxeter theoretic point of view, it is the set of \textit{reflections} of $W$, i.e., the set of $W$-conjugates of the elements of $S$. In particular the reflection length function $\ell_T$ can be defined for an abritrary Coxeter group. There are deep motivations for the study of a (finite) reflection group as a group generated by the set $T$ of all its reflections instead of just the set $S$ of reflections through the walls of a chamber. This approach, nowadays called the \textit{dual} approach, has been a very active field of research in the last fifteen years (see for instance \cite{Dual}, \cite{BWnonc}, \cite{Arm}, \cite{IT}, \cite{Read}).

The above basic observation on the reflection length of a permutation indicates that the cycle decomposition has something which is \textit{dual} in essence. Each cycle can be thought of as a Coxeter element in an irreducible parabolic subgroup of $W$. From the type $A_n$ picture, it therefore appears as natural to generalize a cycle decomposition as a decomposition of an element $w$ of Coxeter system $(W,S)$ into a product of Coxeter elements in irreducible reflection subgroups of $W$, which pairwise commute, and such that the sum of their reflection lengths equals the reflection length of $w$. However, even for finite $W$ there are in general elements failing to admit such a decomposition. In order to make it work, one has to relax the definition of Coxeter element to that of a \textit{quasi-Coxeter element} (in type $A_n$ both are equivalent). Namely, given $w\in W$ and denoting by $\Red(w)$ the set of $T$-reduced expressions of $w$, that is, minimal length expressions of $w$ as product of reflections, we say that $w\in W$ is a \emph{parabolic quasi-Coxeter element} if it satisfies the following condition: 

\begin{condition}\label{con:1}
There exists $(t_1,t_2,\dots, t_k)\in\Red(w)$ such that $W':=\langle
t_1,t_2,\dots, t_k\rangle$ is a parabolic subgroup of $W$.
\end{condition}
If the parabolic subgroup is the whole group $W$ then we just call $w$ a \textit{quasi-Coxeter element}. In type $D_n$ for instance, there are quasi-Coxeter elements which fail to be Coxeter elements (that is, with no $T$-reduced expression yielding a simple system for $W$; see \cite{BGRW}). 

The parabolic subgroup in the above Condition is unique in the sense that if another reduced expression $(q_1, q_2, \dots, q_k)\in \Red(w)$ generates a parabolic subgroup $W''$ of $W$, then $W'=W''$ (see Lemma~\ref{lem:parabw}). We therefore denote $W'$ by $P(w)$. In this situation it is easy to derive


\begin{prop}[Generalized cycle decomposition]\label{main}
Let $(W,S)$ be a Coxeter system. Let $w\in W$ satisfying Condition~\ref{con:1}. There exists a (unique up to the order of the factors) decomposition $w=x_1 x_2\cdots x_m$, $x_i\in W$ such that
\begin{enumerate}
\item $x_i x_j= x_j x_i$ for all $i,j=1,\dots, m$,
\item $\lt(w)=\lt(x_1)+\lt(x_2)+\cdots+\lt(x_m)$,
\item Each $x_i$ admits a $T$-reduced expression generating an
irreducible parabolic subgroup $W_i$ of $W$ and
$$P(w)=W_1\times
W_2\times\cdots\times W_m.$$
\end{enumerate}
\end{prop}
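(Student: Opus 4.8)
The plan is to produce the decomposition from a single $T$-reduced expression of $w$ whose generated subgroup is $P(w)$, and to read off the factors by sorting the reflections appearing in it according to the irreducible components of $P(w)$.

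\emph{Construction of the factors.} I would first invoke two structural facts about parabolic subgroups: a parabolic subgroup $P$ of $W$ is itself a Coxeter group whose set of reflections is $T\cap P$, and the irreducible components of $P$ are again parabolic subgroups of $W$, yielding a decomposition $P(w)=W_1\times W_2\times\cdots\times W_m$ which is unique up to the order of the factors. Since $w$ satisfies Condition~\ref{con:1}, the subgroup $P(w)$ being well defined, there is $(t_1,t_2,\dots,t_k)\in\Red(w)$ with $\langle t_1,\dots,t_k\rangle=P(w)$; in particular $k=\lt(w)$ and $w=t_1t_2\cdots t_k\in P(w)$. Each $t_i$ is a reflection of the Coxeter group $P(w)$, hence lies in a single factor $W_{\sigma(i)}$ (a reflection in a direct product of Coxeter groups belongs to one factor), which defines a map $\sigma\colon\{1,\dots,k\}\to\{1,\dots,m\}$. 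Set $x_j:=\prod_{i\,:\,\sigma(i)=j}t_i$, the product taken in increasing order of $i$. Since elements lying in distinct factors of a direct product commute, the $t_i$ may be regrouped by $\sigma$-value, so that $w=x_1x_2\cdots x_m$ with $x_j\in W_j$ and the $x_j$ pairwise commuting; this is part~(1).

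\emph{The length identity.} Next I would run a counting argument. Put $k_j:=|\sigma^{-1}(j)|$, so $\sum_j k_j=k=\lt(w)$. On one hand $x_j$ is written as a product of $k_j$ reflections, hence $\lt(x_j)\leq k_j$. On the other hand, concatenating $T$-reduced expressions of the commuting factors $x_1,\dots,x_m$ gives an expression of $w$ as a product of $\sum_j\lt(x_j)$ reflections, so $\lt(w)\leq\sum_j\lt(x_j)$. Combining these, $\lt(w)\leq\sum_j\lt(x_j)\leq\sum_j k_j=\lt(w)$, whence equality holds throughout: this yields $\lt(w)=\sum_j\lt(x_j)$, which is part~(2), and also shows $\lt(x_j)=k_j$, i.e.\ the defining expression $(t_i)_{\sigma(i)=j}$ of $x_j$ is $T$-reduced.

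\emph{Parabolicity of the factors, and uniqueness.} For part~(3), set $G_j:=\langle t_i\,:\,\sigma(i)=j\rangle\subseteq W_j$. Since the $W_j$ form an internal direct product, $P(w)=\langle t_1,\dots,t_k\rangle=G_1G_2\cdots G_m$, and projecting onto the $j$-th factor of $W_1\times\cdots\times W_m=P(w)$ forces $G_j=W_j$. Thus the $T$-reduced expression of $x_j$ produced above generates the irreducible parabolic subgroup $W_j$, and $P(w)=W_1\times\cdots\times W_m$ by construction. For uniqueness, if $w=y_1\cdots y_p$ also satisfies (1)--(3), then~(3) exhibits $P(w)$ as a direct product of irreducible parabolic subgroups, so by uniqueness of that decomposition $p=m$ and, after reordering, the factors coincide with the $W_i$; each $y_i$ then lies in $W_i$, and from $w=y_1\cdots y_m$ together with uniqueness of the factorization of an element of a direct product we conclude $y_i=x_i$. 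The only genuinely non-formal inputs are the structural facts about parabolic subgroups recalled at the start, and the step $G_j=W_j$ is the one I expect to need the most care to present: it is precisely here that one uses that the chosen reduced expression generates \emph{all} of $P(w)$ rather than just some reflection subgroup, and this is what makes the factors $W_i$, hence the whole decomposition, canonical.
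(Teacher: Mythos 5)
Your proposal is correct and follows essentially the same route as the paper: take a $T$-reduced expression generating $P(w)$, split $P(w)$ into its irreducible parabolic factors, regroup the reflections factor by factor (your counting argument for (2) and the projection argument for $G_j=W_j$ are just more explicit versions of the paper's steps), and deduce uniqueness from the uniqueness of the direct-product decomposition of $P(w)$ together with uniqueness of factorizations in a direct product.
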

This statement is not entirely satisfying in the sense that we would like to state the maximality condition given in point (3) only in terms of the factors $x_i$ and not in terms of the parabolic subgroups. More precisely, we expect the $x_i$'s to be \textit{indecomposable}, that is, to admit no nontrivial decomposition of the form $u_i v_i$ with $u_iv_i=v_i u_i$ and $\ell_T(x_i)=\ell_T(u_i)+\ell_T(v_i)$. For finite groups at least this can be achieved (see Proposition~\ref{lemquasi}) yielding

\begin{thm}[Generalized cycle decomposition in finite Coxeter groups]\label{mainfinite}
Let $(W,S)$ be a finite Coxeter system. Let $w\in W$ satisfying Condition~\ref{con:1}. There exists a (unique up to the order of the factors) decomposition $w=x_1 x_2\cdots x_m$, $x_i\in W$ such that 
\begin{enumerate}
\item $x_i x_j= x_j x_i$ for all $i,j=1,\dots, m$,
\item $\lt(w)=\lt(x_1)+\lt(x_2)+\cdots+\lt(x_m)$,
\item Each $x_i$ is indecomposable. 
\end{enumerate}
\end{thm}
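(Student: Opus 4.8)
The plan is to deduce Theorem~\ref{mainfinite} from Proposition~\ref{main} and Proposition~\ref{lemquasi}, the latter providing, in the finite case, that a quasi-Coxeter element of an irreducible Coxeter group is indecomposable. For existence I would start from the decomposition $w=x_1x_2\cdots x_m$ of Proposition~\ref{main}, which already satisfies (1) and (2) and in which each $x_i$ is a quasi-Coxeter element of the irreducible parabolic subgroup $W_i=P(x_i)$; the only thing to add is that ``indecomposable in $W$'' and ``indecomposable in $W_i$'' coincide for $x_i$. This uses that $W_i$ is parabolic: its reflection set is $T\cap W_i$, so $\lt$ restricted to $W_i$ is the reflection length of $W_i$; and if $x_i=uv$ with $uv=vu$ and $\lt(x_i)=\lt(u)+\lt(v)$, then, via the identity $\lt(g)=\mathrm{codim}\,\mathrm{Fix}(g)$ in finite Coxeter groups, the roots of all reflections appearing in $T$-reduced expressions of $u$ and of $v$ lie in the span of the roots of $W_i$, so $u,v\in W_i$. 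Thus any nontrivial such factorization already takes place inside $W_i$, and Proposition~\ref{lemquasi} gives that each $x_i$ is indecomposable, so $w=x_1\cdots x_m$ satisfies (1)--(3).

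For uniqueness, let $w=y_1y_2\cdots y_\ell$ be any decomposition satisfying (1)--(3); I would show it meets the conclusions of Proposition~\ref{main} and then invoke the uniqueness asserted there. By (2), concatenating $T$-reduced expressions of the $y_j$ produces a $T$-reduced expression $C$ of $w$; since $w$ is a parabolic quasi-Coxeter element, transitivity of the Hurwitz action on $\Red(w)$ forces the subgroup generated by $C$ to be $P(w)$. From $\lt=\mathrm{codim}\,\mathrm{Fix}$ together with (1)--(2) one gets that the moved spaces $\mathrm{Mov}(y_j)$ sit in \emph{orthogonal} direct sum inside $\mathrm{Mov}(w)$; hence the subgroups $G_j$ generated by the consecutive blocks of $C$ commute pairwise with trivial pairwise intersections, so $P(w)=G_1\times\cdots\times G_\ell$, and sandwiching the parabolic closure $P(y_j)$ (parabolic by Steinberg's theorem) between $G_j$ and this product yields $G_j=P(y_j)$. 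So each $y_j$ is a parabolic quasi-Coxeter element and $P(w)=P(y_1)\times\cdots\times P(y_\ell)$; and if some $P(y_j)$ were reducible, applying Proposition~\ref{main} to $y_j$ would decompose it nontrivially, contradicting (3). Thus each $P(y_j)$ is irreducible, the pairs $(y_j,P(y_j))$ satisfy the conclusions of Proposition~\ref{main}, and its uniqueness gives $\ell=m$ with $(y_1,\dots,y_\ell)$ a permutation of $(x_1,\dots,x_m)$.

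I expect the main obstacle to be the control of the parabolic closures $P(y_j)$ in the uniqueness part. In general a $T$-reduced expression need not generate a parabolic subgroup --- this is precisely what allows quasi-Coxeter elements that are not Coxeter elements in type $D$ --- so one really has to use both that $w$ itself is a parabolic quasi-Coxeter element (for transitivity of the Hurwitz action on $\Red(w)$) and finiteness (for $\lt=\mathrm{codim}\,\mathrm{Fix}$ and the orthogonality of moved spaces of commuting factors) to force each $y_j$ to be a parabolic quasi-Coxeter element with $P(w)=\prod_jP(y_j)$. Granting this, Theorem~\ref{mainfinite} follows formally from Propositions~\ref{main} and~\ref{lemquasi}.
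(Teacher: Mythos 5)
Your proposal is correct, and at the top level it follows the same strategy as the paper: existence from Proposition~\ref{main} combined with Proposition~\ref{lemquasi}, plus a uniqueness argument. But your uniqueness argument takes a genuinely different route. The paper argues combinatorially: it concatenates $T$-reduced expressions of the $y_i$, uses Theorem~\ref{thm:parabreduit} to see that all the reflections involved lie in $P(w)$, distributes them among the irreducible parabolic factors of $P(w)$ (indecomposability of $y_i$ forcing all letters of $y_i$ into a single factor $W(i)$, which is one of the $P(x_j)$), and concludes by uniqueness of components in the direct product $P(w)=P(x_1)\times\cdots\times P(x_m)$ together with indecomposability of the $x_j$. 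You instead show that \emph{any} decomposition satisfying (1)--(3) already satisfies the conclusions of Proposition~\ref{main} --- each $y_j$ is a parabolic quasi-Coxeter element with irreducible closure and $P(w)=\prod_j P(y_j)$ --- using the Euclidean picture (Carter's Lemma, $\lt=\mathrm{codim}\,V^{(-)}$, orthogonality of moved spaces of commuting factors with additive lengths, parabolic subgroups as fixers of subspaces), and then invoke the uniqueness clause of Proposition~\ref{main}. Both routes are valid for finite $W$; the paper's is leaner and stays group-theoretic at this step, while yours makes explicit a fact the paper only gets implicitly (the factors of any decomposition satisfying (1)--(3) are automatically parabolic quasi-Coxeter elements), and you also usefully spell out the reduction, glossed over in the paper, that indecomposability of $x_i$ inside $W_i=P(x_i)$ yields indecomposability inside $W$.

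Two steps are stated more briskly than they deserve. First, ``sandwiching $P(y_j)$ between $G_j$ and $G_1\times\cdots\times G_\ell$'' does not by itself give $G_j=P(y_j)$: a group squeezed between a factor and the whole product need not equal the factor. What closes this is the orthogonality you set up: every element of $P(y_j)=\mathrm{Fix}(V^{y_j})$ fixes $\mathrm{Mov}(y_k)\subseteq V^{y_j}$ pointwise for $k\neq j$, and each $G_k$ acts trivially off $\mathrm{Mov}(y_k)$, so the components of such an element in the factors $G_k$, $k\neq j$, are trivial, giving $P(y_j)\subseteq G_j$. Second, the orthogonality claim itself ($\mathrm{Mov}(y_j)\perp\mathrm{Mov}(y_k)$ for $j\neq k$) needs the argument from the paper's proof of Proposition~\ref{lemquasi}: conditions (1)--(2) give pairwise additivity of $\lt$, hence $\mathrm{Mov}(y_j)\cap\mathrm{Mov}(y_k)=0$ by Carter's Lemma (Lemma~\ref{carter}), and commutativity then yields $\mathrm{Mov}(y_k)\subseteq V^{y_j}$. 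With these details filled in, your argument is complete.
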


Theorem \ref{mainfinite} is the analogue of the cycle decomposition for parabolic quasi-Coxeter elements. In general there are elements in $W$ failing to be parabolic quasi-Coxeter elements, but the advantage of this definition is that such an element is always a quasi-Coxeter element in a reflection subgroup (but this subgroup is never unique when $W$ is finite: in that case by Corollary \ref{cor:corresp} below these reflection subgroups are in bijection with the number of Hurwitz orbits on $\Red(w)$; by \cite[Theorem 1.1]{BGRW} this number is one precisely when $w$ satisfies Condition~\ref{con:1}).   


\section{Coxeter groups and their parabolic subgroups}\label{sec:cox}

Let $(W, S)$ be a (not necessarily finite) Coxeter system of rank $n=|S|$. We assume the reader to be familiar with the general theory of Coxeter groups and refer to \cite{Bou} or \cite{Humph} for basics on the topic. Let $T=\bigcup_{w\in W} w S w^{-1}$ be
the set of \emph{reflections} of $W$. Let $\lt:W\rightarrow\mathbb{Z}_{\geq 0}$ be the \emph{reflection length}, that is, for $w\in W$ the integer $\ell_T(w)$ is the smallest possible length of an expression of $w$ as product of reflections. We write $\leq_T$ for the \emph{absolute order} on $W$, that is, for $u, v\in W$ we set
$$u \leq_T v~\Leftrightarrow~\lt(u)+\lt(u^{-1}v)=\lt(v).$$

Given $w\in W$, we denote by $\Red(w)$ the set of \emph{$T$-reduced expressions} of $w$, that is, the set of minimal length expressions for
$w$ as products of reflections.

\begin{definition}\label{defn:parabolic}
A subgroup $W'\subseteq W$ is \emph{parabolic} if it exists a subset $S'=\{ r_1, r_2, \dots, r_n\}\subseteq T$ and $m\leq n$ such that $(W, S')$ is a Coxeter system and $W'=\langle r_1, r_2, \dots, r_m\rangle$. 
\end{definition}

The above definition, which is borrowed from \cite{BDSW}, is more general than the usual definition of parabolic subgroups as conjugates of subgroups generated by subsets of $S$. In~\cite[4.4 and 4.6]{BGRW} it is shown that the above definition is equivalent to the classical one for finite and irreducible $2$-spherical Coxeter groups. The example below shows that the two definitions are not equivalent in general: 

\begin{exple}
Let $W$ be a universal Coxeter group on three generators $S=\{s, t, u\}$, that is, with no relation between distinct generators. Then $S':=\{s, t, tut\}\subseteq T$ is a simple system for $W$, hence $X:=\langle s, tut\rangle$ is parabolic. However, using the fact that elements of $W$ have a unique $S$-reduced expression (because $W$ is universal) it is easy to check that $X$ is not conjugate to any of the three rank $2$ standard parabolic subgroups of $W$. 

\end{exple}

Parabolic subgroups
provide a family of \emph{reflection subgroups} of $W$, that is, subgroups
generated by reflections. Any reflection subgroup $W'\subseteq W$ comes equipped with a
canonical structure of Coxeter group (see~\cite{Dyer}), in particular it has a canonical set $S'$ of Coxeter generators. Moreover by \cite[Corollary 3.11 (ii)]{Dyer} the set $\mathrm{Ref}(W')$ of $W'$-conjugates of $S'$ (the \textit{reflections} of $W'$) coincide with $W'\cap T$. The \emph{rank}
$\rk(W')$ of $W'$ is defined to be $|S'|$.

The following result will be useful: 

\begin{thm}[{\cite[Theorem 1.4]{BDSW}}]\label{thm:parabreduit}
Let $W'\subseteq W$ be a parabolic subgroup. Let $w\in W'$. Then $\mathrm{Red}_{T'}(w)=\Red(w),$
where $T'=W'\cap T$ is the set of reflections of $W'$. 
\end{thm}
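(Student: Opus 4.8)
The plan is to reduce the statement to a single downward-closure property of parabolic subgroups, and then to prove that property. One inclusion is essentially formal: since $T'=W'\cap T\subseteq T$, any factorization of $w$ into reflections from $T'$ is in particular a factorization into reflections from $T$, so $\lt(w)\leq\ell_{T'}(w)$ for every $w\in W'$. Hence both assertions of the theorem --- the length identity $\lt(w)=\ell_{T'}(w)$ and the equality of reduced-expression sets --- will follow from the single claim $(\dagger)$: \emph{for each $w\in W'$, every element of $\Red(w)$ involves only reflections lying in $W'$}. Indeed $(\dagger)$ exhibits a $T'$-factorization of $w$ of length $\lt(w)$, so $\ell_{T'}(w)\leq\lt(w)$ and thus $\ell_{T'}(w)=\lt(w)$; and then, since the two length functions agree on $W'$ and every $T'$-expression is a $T$-expression, a tuple of reflections is a $T$-reduced expression of $w$ if and only if it is a $T'$-reduced expression of $w$.

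Next I would prove $(\dagger)$ by induction on $k=\lt(w)$, the case $k=0$ being trivial. The engine is the claim $(\ast)$: \emph{if $w\in W'$ and $t\in T$ with $t\leq_T w$, then $t\in W'$}. Granting $(\ast)$, let $(t_1,\dots,t_k)\in\Red(w)$ with $w\in W'$; then $t_1\leq_T w$, so $t_1\in W'$ by $(\ast)$, hence $t_1 w=t_2\cdots t_k$ lies in $W'$, has reflection length $k-1$, and $(t_2,\dots,t_k)\in\Red(t_1 w)$, so the inductive hypothesis puts $t_2,\dots,t_k$ in $W'$ as well. Thus $(\ast)$ --- the statement that parabolic subgroups are closed downwards under $\leq_T$ as far as reflections are concerned --- is the natural point to isolate, and the one that carries the real content.

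It remains to establish $(\ast)$. When $W$ is \emph{finite} I would work in the reflection representation $V$: writing $\mathrm{Mov}(u):=\mathrm{Im}(u-\mathrm{id})$ and $\mathrm{Fix}(u):=\ker(u-\mathrm{id})$, one has $\lt(u)=\dim\mathrm{Mov}(u)$ and $\mathrm{Mov}(u)=\mathrm{Fix}(u)^{\perp}$. From $w=t\cdot(tw)$ one gets $\mathrm{Mov}(w)\subseteq t\cdot\mathrm{Mov}(tw)+\mathbb{R}\alpha_t$, where $\alpha_t$ is the root of $t$; since $t\leq_T w$ gives $\dim\mathrm{Mov}(tw)=\dim\mathrm{Mov}(w)-1$, a dimension count forces this sum to be direct, so $\alpha_t\in\mathrm{Mov}(w)$, i.e. the reflecting hyperplane of $t$ contains $\mathrm{Fix}(w)$. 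Hence $t\in\mathrm{Stab}_W(\mathrm{Fix}(w))$, and since $W'$ is a finite parabolic subgroup containing $w$ we have $\mathrm{Fix}(W')\subseteq\mathrm{Fix}(w)$, so $t\in\mathrm{Stab}_W(\mathrm{Fix}(w))\subseteq\mathrm{Stab}_W(\mathrm{Fix}(W'))=W'$. For a general (possibly infinite) Coxeter group this geometry is not available --- reflection length need not be the codimension of the fixed space, and the roots occurring in a $T$-reduced word need not be linearly independent --- so I would instead argue combinatorially with the root system $\Phi$, using Dyer's structure theory of reflection subgroups (canonical simple systems, positive systems of reflection subgroups) to prove directly that if $w\in W'$ and $\alpha\in\Phi^{+}$ does not belong to the root subsystem $\{\beta\in\Phi:s_\beta\in W'\}$ of $W'$, then $\lt(s_\alpha w)>\lt(w)$, which contradicts $s_\alpha\leq_T w$. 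Carrying out this last point --- pinning down, in terms of the root system of the parabolic, which reflections can head a $T$-reduced word of $w\in W'$ --- is the step I expect to be the main obstacle; it is the substantive content borrowed from \cite{BDSW}, everything before it being a formal reduction.
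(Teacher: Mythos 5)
The paper does not prove this statement at all: it is imported verbatim as \cite[Theorem 1.4]{BDSW}, so there is no internal proof to compare with and your proposal must stand on its own. Your formal reductions are correct: the theorem does follow from your claim $(\dagger)$, and $(\dagger)$ follows by the induction you describe from the downward-closure property $(\ast)$ (if $w\in W'$, $t\in T$ and $t\leq_T w$, then $t\in W'$); note that $(\ast)$ is in fact equivalent to the theorem, since conversely $t\leq_T w$ puts $t$ at the start of some element of $\Red(w)=\mathrm{Red}_{T'}(w)$. Your proof of $(\ast)$ for \emph{finite} $W$ is essentially complete and rather clean: it uses only $\lt(u)=\dim\mathrm{Mov}(u)$ (Carter's Lemma, Lemma~\ref{carter}) and the characterization of parabolic subgroups as pointwise stabilizers of subspaces (Proposition~\ref{prop:bou}), both of which are available in the paper, so in the finite case you have a self-contained argument that bypasses \cite{BDSW} entirely.

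The genuine gap is the general case. The theorem is stated for an arbitrary Coxeter system, with ``parabolic'' in the sense of Definition~\ref{defn:parabolic}, and the paper needs it in that generality (it feeds into Lemma~\ref{lem:parabw} and Proposition~\ref{main}, which are not restricted to finite $W$). For infinite $W$ your treatment of $(\ast)$ is only a plan: you propose to show, via ``Dyer's structure theory,'' that if $w\in W'$ and $s_\alpha\notin W'$ then $\lt(s_\alpha w)>\lt(w)$ --- but that assertion is exactly $(\ast)$, hence exactly the theorem, and no argument is given for it; as you acknowledge, your finite-case mechanism (reflection length as codimension of the fixed space, parabolics as fixators of subspaces, linear independence of roots in a reduced word) has no analogue in the infinite setting, and it is not clear how Dyer's canonical generators alone would yield the length inequality. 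So the proposal proves the finite case but does not prove the statement as stated; the substantive content of \cite[Theorem 1.4]{BDSW} remains assumed rather than established.
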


In this context it seems natural to us to conjecture the following: 

\begin{conjecture}\label{conj:parabeng}
Let $w\in W$. Assume that there is $(t_1, t_2,\dots, t_k)\in\Red(w)$ such that $W':=\langle t_1, t_2, \dots, t_k\rangle$ is parabolic. Then for any $(q_1, q_2,\dots, q_k)\in\Red(w)$ we have $W'=\langle q_1, q_2, \dots, q_k\rangle$. 
\end{conjecture}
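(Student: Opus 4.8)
The plan is to first reduce Conjecture~\ref{conj:parabeng} to the case $W'=W$ --- the case of a quasi-Coxeter element --- and then to bring in the Hurwitz action. Let $(q_1,\dots,q_k)\in\Red(w)$ be arbitrary. I would start from the standard observation that every reflection occurring in a $T$-reduced expression of $w$ lies below $w$ in the absolute order: writing $w=r_1\cdots r_k$ with $r_i\in T$ and $k=\lt(w)$, conjugating the letters $r_1,\dots,r_{i-1}$ by $r_i$ expresses $r_iw$ as a product of $k-1$ reflections, so $\lt(r_iw)\leq k-1$, and since $\lt(r_iw)\geq\lt(w)-1$ we get $\lt(r_i)+\lt(r_iw)=\lt(w)$, i.e.\ $r_i\leq_T w$. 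Applying this to $(t_1,\dots,t_k)$ and to $(q_1,\dots,q_k)$, all the $t_i$ and all the $q_i$ lie below $w$. Moreover, if $r\leq_T w$ then $w=r\cdot(rw)$ with reflection lengths adding, so $r$ is the first letter of some element of $\Red(w)$; as $W'$ is parabolic and $w\in W'$, Theorem~\ref{thm:parabreduit} gives $\Red(w)=\mathrm{Red}_{T'}(w)$ with $T'=W'\cap T$, whence $r\in T'\subseteq W'$. Consequently $q_1,\dots,q_k\in W'$, so $V:=\langle q_1,\dots,q_k\rangle\subseteq W'$, and the factorizations $(q_1,\dots,q_k)$ and $(t_1,\dots,t_k)$ both lie in $\mathrm{Red}_{T'}(w)$. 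The whole problem therefore takes place inside the Coxeter group $W'$: one has to show $V=W'$, knowing that $(t_1,\dots,t_k)$ is a $T'$-reduced expression of $w$ generating $W'$. So we may assume $W'=W$, and it is enough to prove that \emph{if one $T$-reduced expression of $w$ generates $W$, then every $T$-reduced expression of $w$ generates $W$}. (As a by-product the argument identifies $W'=P(w)$ with the smallest parabolic subgroup of $W$ containing $w$, which already sharpens Lemma~\ref{lem:parabw}.)

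The braid group on $k$ strands acts on $\Red(w)$ by Hurwitz moves, and an elementary Hurwitz move does not change the subgroup generated by a factorization; hence the map $(r_1,\dots,r_k)\mapsto\langle r_1,\dots,r_k\rangle$ is constant on Hurwitz orbits. Thus, whenever the Hurwitz action on $\Red(w)$ is transitive, all $T$-reduced expressions of $w$ generate the same subgroup, and by the reduction the conjecture holds for $w$. This already covers two cases: if $w$ is a parabolic Coxeter element, the Hurwitz action on $\Red(w)$ is transitive in arbitrary Coxeter groups by \cite{BDSW}; and if $W$ is finite, then by \cite[Theorem~1.1]{BGRW} every parabolic quasi-Coxeter element --- in particular our $w$, by the reduction --- has transitive Hurwitz action on its $T$-reduced expressions.

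The case I expect to be the genuine obstacle is that of an infinite Coxeter group $W$ together with a parabolic quasi-Coxeter element $w$ that is not a Coxeter element. One line of attack is to prove the transitivity of the Hurwitz action on $\Red(w)$ directly, but the finite-group argument depends on the classification of finite Coxeter groups, on the admissible diagrams of Carter, and on case analysis in the exceptional types, and for infinite $W$ even the quasi-Coxeter elements of an affine Weyl group are not presently classified; so a sensible first target is the affine case, to be handled through the lattice and root-system combinatorics special to it, perhaps together with an induction on $\lt(w)$ that would require knowing that $rw$ is again a parabolic quasi-Coxeter element for every reflection $r\leq_T w$. A second, more structural line is to avoid the Hurwitz action entirely: by the reduction it would suffice to show that, for a quasi-Coxeter element $w$ of $W$, every reflection $\leq_T w$ already belongs to the subgroup generated by any prescribed $T$-reduced expression of $w$ --- equivalently, that the parabolic closure of $w$ does not depend on the ambient reflection group. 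Either route seems to need foundational results on parabolic subgroups of general Coxeter groups, on their intersections, and on their interaction with (possibly non-parabolic) reflection subgroups, that go beyond what is collected in Section~\ref{sec:cox}; this is presumably why the statement is for now only a conjecture.
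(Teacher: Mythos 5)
This statement is a conjecture in the paper and is not proved there in general: the paper only establishes it for finite $W$ and for parabolic Coxeter elements, by exactly the argument you give, namely that Hurwitz moves preserve the subgroup generated by a $T$-reduced expression and that transitivity is supplied by \cite[Theorem 1.1]{BGRW} in the finite case and by \cite[Theorem 1.3]{BDSW} for parabolic Coxeter elements. Your treatment of these two cases is correct and essentially the same as the paper's (your reduction to $W'$ via Theorem~\ref{thm:parabreduit} is the mechanism behind Lemma~\ref{lem:parabw}, and in the finite case one can also invoke \cite{BGRW} directly in $W$ without that reduction), and you rightly do not claim to settle the remaining case of infinite $W$ with $w$ not a parabolic Coxeter element, which is precisely why the statement is left as a conjecture.
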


Note that

\begin{thm}[{\cite{BGRW}, \cite{BDSW}}]
Conjecture~\ref{conj:parabeng} holds in the following cases: 
\begin{enumerate}
\item When $W$ is finite,
\item When $w$ is a \emph{parabolic Coxeter element} in $W$, that is, if it exists $S'=\{r_1, r_2, \dots, r_n\}\subseteq T$ and $m\leq n$ such that $w= r_1 r_2\cdots r_m$ and $S'$ is a simple system for $W$.  
\end{enumerate}
\end{thm}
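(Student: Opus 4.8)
The plan is to deduce both parts from the transitivity of the \emph{Hurwitz action} on $\Red(w)$, together with the elementary observation that this action does not change the subgroup generated by a factorization. Recall that the braid group $B_k$ on $k$ strands acts on the set of length‑$k$ factorizations of a fixed $w\in W$ into reflections, the standard generator $\sigma_i$ acting by $(\dots,t_i,t_{i+1},\dots)\mapsto(\dots,t_it_{i+1}t_i,\,t_i,\dots)$; since the product and the number of factors are unchanged, this restricts to an action of $B_k$ on $\Red(w)$ when $k=\lt(w)$, whose orbits are the \emph{Hurwitz orbits} referred to in the introduction.

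The first step is to record the following: if $(t_1',\dots,t_k')$ lies in the $B_k$‑orbit of $(t_1,\dots,t_k)$, then $\langle t_1',\dots,t_k'\rangle=\langle t_1,\dots,t_k\rangle$. It suffices to check this for $\sigma_i^{\pm1}$, where it is immediate from $t_{i+1}=t_i(t_it_{i+1}t_i)t_i$ together with $t_i^2=1$ (each of the two subgroups contains the generators of the other). Consequently, all the factorizations in a single Hurwitz orbit on $\Red(w)$ generate one and the same subgroup of $W$.

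For part (1), assume $W$ is finite and $w$ satisfies Condition~\ref{con:1}, say $(t_1,\dots,t_k)\in\Red(w)$ with $W'=\langle t_1,\dots,t_k\rangle$ parabolic. By \cite[Theorem~1.1]{BGRW}, for $w$ in a finite Coxeter group the number of Hurwitz orbits on $\Red(w)$ equals one if and only if $w$ satisfies Condition~\ref{con:1}; in particular our hypothesis gives a single Hurwitz orbit, so any $(q_1,\dots,q_k)\in\Red(w)$ lies in the $B_k$‑orbit of $(t_1,\dots,t_k)$, and the first step yields $\langle q_1,\dots,q_k\rangle=W'$. For part (2), suppose $w$ is a parabolic Coxeter element, say $w=r_1\cdots r_m$ with $\{r_1,\dots,r_n\}\subseteq T$ a simple system and $m\le n$. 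Then $W'':=\langle r_1,\dots,r_m\rangle$ is a (standard) parabolic subgroup in which $w$ is a Coxeter element; since a Coxeter element of a rank‑$m$ Coxeter group has reflection length $m$, the word $(r_1,\dots,r_m)$ is $T''$‑reduced, where $T''=W''\cap T$, and Theorem~\ref{thm:parabreduit} then gives $\lt(w)=m$ and $(r_1,\dots,r_m)\in\Red(w)$. Thus $w$ already satisfies Condition~\ref{con:1} via this factorization, and by the transitivity of the Hurwitz action on $\Red(w)$ for parabolic Coxeter elements (the main result of \cite{BDSW}) every $(q_1,\dots,q_m)\in\Red(w)$ lies in the $B_m$‑orbit of $(r_1,\dots,r_m)$; by the first step $\langle q_1,\dots,q_m\rangle=W''$, which is the desired conclusion.

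The entire substance of the theorem is carried by the two external transitivity statements — \cite[Theorem~1.1]{BGRW} in the finite case and the main theorem of \cite{BDSW} for parabolic Coxeter elements — and these are precisely the parts I expect to be hard (the finite case rests on a reduction to irreducible groups and a type‑by‑type analysis; the possibly infinite parabolic Coxeter case needs a more delicate argument with roots). Once they are granted, the argument above is purely formal; the only point requiring a little care is the verification in part (2) that $(r_1,\dots,r_m)$ is genuinely $T$‑reduced, which is exactly where Theorem~\ref{thm:parabreduit} enters.
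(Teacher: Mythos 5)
Your argument is correct and follows essentially the same route as the paper: both cases are deduced from transitivity of the Hurwitz action on $\Red(w)$ (via \cite[Theorem 1.1]{BGRW} for finite $W$ and the transitivity result of \cite{BDSW} for parabolic Coxeter elements) combined with the observation that a Hurwitz move does not change the subgroup generated by the factors. Your additional verification that $(r_1,\dots,r_m)$ is $T$-reduced is a harmless extra detail the paper leaves implicit.
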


\begin{proof}
Conjecture~\ref{conj:parabeng} for finite $W$ is an immediate consequence of \cite[Theorem 1.1]{BGRW}: there it is shown that an element satisfies Condition~\ref{con:1} if and only if the Hurwitz action (see Section~\ref{hurwitz} for the definition) is transitive on $\Red(w)$; but this action leaves the subgroup generated by the reflections in a $T$-reduced expression invariant. It also holds for parabolic Coxeter elements since in that case the Hurwitz action is also transitive on $\Red(w)$ by \cite[Theorem 1.3]{BDSW}.  
\end{proof}

\begin{lem}\label{lem:parabw}
Let $w\in W$. Assume that $(t_1, t_2, \cdots, t_k), (q_1, q_2, \dots, q_k)\in\Red(w)$ are such that both $W':=\langle t_1, t_2, \dots, t_k\rangle$ and $W'':=\langle q_1, q_2, \dots, q_k\rangle$ are parabolic. Then $W'=W''$. 
\end{lem}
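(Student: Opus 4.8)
The plan is to derive everything from Theorem~\ref{thm:parabreduit}, which says that the $T$-reduced expressions of an element lying in a parabolic subgroup use only the reflections of that parabolic subgroup. First I would record the trivial but crucial observation that $w=t_1 t_2\cdots t_k$ lies in $W'$ (each $t_i\in W'$ by definition of $W'$) and, symmetrically, $w=q_1 q_2\cdots q_k$ lies in $W''$. Hence Theorem~\ref{thm:parabreduit} is applicable to $w$ viewed as an element of either parabolic subgroup.

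Next, applying Theorem~\ref{thm:parabreduit} to the parabolic subgroup $W'$, with reflection set $T'=W'\cap T$, yields $\mathrm{Red}_{T'}(w)=\Red(w)$. Since $(q_1,q_2,\dots,q_k)\in\Red(w)$, it is in particular a $T'$-reduced expression of $w$, so $q_i\in T'\subseteq W'$ for every $i$; therefore $W''=\langle q_1,\dots,q_k\rangle\subseteq W'$. Exchanging the roles of $W'$ and $W''$ and running the same argument gives $W'\subseteq W''$, and the two inclusions force $W'=W''$.

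Since the argument is a direct application of Theorem~\ref{thm:parabreduit}, there is no genuine obstacle; the only thing to verify is that $w$ really belongs to both $W'$ and $W''$, which is immediate. It is worth emphasizing, though, where the hypotheses are used: the lemma is exactly the special case of Conjecture~\ref{conj:parabeng} in which \emph{both} generated subgroups are already known to be parabolic, and it is precisely this extra assumption on $W''$ that lets us invoke Theorem~\ref{thm:parabreduit} a second time rather than only once.
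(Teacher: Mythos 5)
Your argument is correct and coincides with the paper's own proof: apply Theorem~\ref{thm:parabreduit} to the parabolic subgroup $W'$ to see that every $q_i$ lies in $W'$, giving $W''\subseteq W'$, and then swap the roles of $W'$ and $W''$. The closing remark about why parabolicity of both subgroups is needed (as opposed to the situation of Conjecture~\ref{conj:parabeng}) is accurate as well.
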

\begin{proof}
Since $W'$ is parabolic, by Theorem~\ref{thm:parabreduit} we have $q_i\in W'$ for all $i$, hence $W''\subseteq W'$. Reversing the roles of $W'$ and $W''$ we get $W'\subseteq W''$. 
\end{proof}
The lemma above allows the following definition

\begin{definition}
Let $w\in W$ satisfying Condition~\ref{con:1}. We denote by $P(w)$ the parabolic subgroup of $W$ generated by any $T$-reduced decomposition of $w$ generating a parabolic subgroup. This is well-defined by Lemma~\ref{lem:parabw}.
\end{definition}

It follows immediately from Theorem~\ref{thm:parabreduit} that any parabolic subgroup $P$ containing $w$ must contain $P(w)$. We call $P(w)$ the \textit{parabolic closure} of $w$. 

\begin{lem}\label{lem:product}
Let $W'\subseteq W$ be a finitely generated reflection subgroup. Then there is a unique (up to the order of the factors) decomposition $W'= W_1\times W_2\times \cdots \times W_k$ where $W_1, W_2,\dots, W_k$ are irreducible reflection subgroups of $W'$ and $\mathrm{Ref}(W)=\dot{\bigcup}_{i=1}^k \mathrm{Ref}(W_i)$. 
\end{lem}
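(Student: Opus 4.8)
The plan is to produce the decomposition from the canonical Coxeter structure that every reflection subgroup carries (\cite{Dyer}), and then to use the disjointness condition $\mathrm{Ref}(W')=\dot{\bigcup}_{i}\mathrm{Ref}(W_i)$ as the rigidity that forces uniqueness. For existence: since $W'$ is finitely generated and $(W',S')$ is a Coxeter system with canonical generating set $S'\subseteq T$, the set $S'$ is finite --- writing finitely many group generators of $W'$ as words in $S'$ exhibits $W'$ as generated by a finite subset $S_0\subseteq S'$, and a standard parabolic subgroup of $(W',S')$ equals the whole group only when its generating subset is $S'$, so $S'=S_0$. Let $S'=S'_1\sqcup\cdots\sqcup S'_k$ be the partition into connected components of the Coxeter graph of $(W',S')$ and put $W_i:=\langle S'_i\rangle$. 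The $W_i$ are standard parabolic subgroups of $(W',S')$, hence reflection subgroups of $W$; generators lying in distinct components commute, so the $W_i$ commute element-wise; and $W_i\cap\langle\bigcup_{j\neq i}S'_j\rangle$, being a standard parabolic subgroup on $S'_i\cap\bigcup_{j\neq i}S'_j=\emptyset$, is trivial. Therefore $W'=W_1\times\cdots\times W_k$ with each $(W_i,S'_i)$ irreducible.

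For the reflection-set condition: by \cite[Corollary~3.11]{Dyer}, $\mathrm{Ref}(W')=W'\cap T$ and $\mathrm{Ref}(W_i)=W_i\cap T$, so disjointness follows from $W_i\cap W_j=\{1\}$ and $1\notin T$; and any $t\in W'\cap T$ is of the form $w'sw'^{-1}$ with $w'\in W'$ and $s\in S'_i$ for some $i$, and writing $w'=w_1\cdots w_k$ with $w_j\in W_j$ collapses this to $t=w_i s w_i^{-1}\in W_i$.

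For uniqueness, I would first record a characterization: a reflection subgroup $W''$ is irreducible if and only if it admits no decomposition $W''=A\times B$ into two \emph{nontrivial} reflection subgroups with $\mathrm{Ref}(W'')=\mathrm{Ref}(A)\sqcup\mathrm{Ref}(B)$ --- given such a decomposition, every element of the canonical generating set of $W''$ is a reflection of $W''$ hence lies in $A$ or in $B$, so that generating set splits with no Coxeter-graph edge across the two parts, and nontriviality of both parts disconnects the graph. Now let $W'=V_1\times\cdots\times V_l$ be a second decomposition as in the statement, all factors nontrivial. Fix $j$, set $R_i:=\mathrm{Ref}(V_j)\cap\mathrm{Ref}(W_i)$, $I:=\{\,i:R_i\neq\emptyset\,\}$ and $A_i:=\langle R_i\rangle$ for $i\in I$. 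Since $R_i\subseteq W_i$ the subgroups $A_i$ commute pairwise and intersect trivially, and together they generate $V_j=\langle\mathrm{Ref}(V_j)\rangle$; comparing, for $t\in V_j\cap T$, the decomposition of $t$ in $W'=\prod_i W_i$ with its decomposition via the $A_i$'s yields $\mathrm{Ref}(V_j)=\dot{\bigcup}_{i\in I}\mathrm{Ref}(A_i)$. Hence $V_j=\prod_{i\in I}A_i$ is a decomposition of the type in the characterization, so irreducibility of $V_j$ forces $|I|=1$: there is a unique $\varphi(j)$ with $\mathrm{Ref}(V_j)\subseteq\mathrm{Ref}(W_{\varphi(j)})$. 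Symmetrically there is $\psi(i)$ with $\mathrm{Ref}(W_i)\subseteq\mathrm{Ref}(V_{\psi(i)})$; as the $\mathrm{Ref}(W_i)$ are nonempty and pairwise disjoint, and likewise the $\mathrm{Ref}(V_j)$, the maps $\varphi,\psi$ are mutually inverse bijections, so $\mathrm{Ref}(V_j)=\mathrm{Ref}(W_{\varphi(j)})$ and therefore $V_j=\langle\mathrm{Ref}(V_j)\rangle=W_{\varphi(j)}$.

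The main obstacle is not a calculation but pinning down two implicit inputs. First, that the canonical generating set (in Dyer's sense) of the standard parabolic $\langle S'_i\rangle$ of $(W',S')$ is exactly $S'_i$, so that calling $W_i$ an \emph{irreducible reflection subgroup} really does mean $(W_i,S'_i)$ is an irreducible Coxeter system; this is the functorial behaviour of the canonical generating set and must be quoted carefully from \cite{Dyer}. Second, the compatibility of the two nested direct-product decompositions of $W'$ at the level of individual elements, which is what turns the reflection-set partition of $V_j$ into an honest internal direct product; this is routine but is the step where one could silently lose a factor.
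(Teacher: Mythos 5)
Your proof is correct and takes essentially the same route as the paper's: existence via Dyer's canonical Coxeter generators of $W'$ and the connected components of its Coxeter graph, and uniqueness by showing that the reflections of each irreducible factor of any competing decomposition must lie in a single $W_i$ (a step the paper compresses into one sentence and you spell out in detail). No gaps.
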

\begin{proof}
Let $S'$ be the canonical set of Coxeter generators of $W'$ (see \cite{Dyer}). By \cite[Corollary 3.11]{Dyer}, $S'$ is finite and $\bigcup_{w\in W'} w S' w^{-1}= W'\cap T$. If $\Gamma_1, \Gamma_2, \dots, \Gamma_k$ are the irreducible components of the Coxeter graph of $(W', S')$, then $W'= W_1\times W_2\times\cdots \times W_k$ (where $W_i$ is generated by the nodes of $\Gamma_i$) and each $W_i$ is an irreducible reflection subgroup. Now if there is another decomposition $W= W'_1 \times W'_2\times\cdots \times W'_\ell$, then all the reflections in $W_i'$ must be included in $W_j$ for some $j$ (otherwise irreducibility is not satisfied) and vice-versa, implying uniqueness of the decomposition. 
\end{proof}

\section{Generalized cycle decompositions}

\subsection{Proof of Proposition~\ref{main}}\label{sec:main}

\begin{proof}[Proof of Proposition~\ref{main}]
Let $(t_1,t_2,\dots, t_k)\in \Red(w)$ such that $P:=\langle t_1,
t_2,\dots, t_k\rangle$ is parabolic. By Lemma~\ref{lem:parabw} we have
$P=P(w)$. It follows from the definition of a parabolic subgroup that there is a (unique up to the order of the factors)
factorization $$P=W_1\times W_2\times\cdots \times W_m$$ where the $W_i$'s
are irreducible parabolic subgroups. Moreover we have $k=\rk (P)=\sum_{i=1}^m
\rk(W_i)$ and $\Ref(P)=\dot{\bigcup}_{i=1}^m \Ref(W_i)$.
It follows that for each $j=1,\dots, k$, there exists $j'\in\{1, \dots,
m\}$ such that $t_j\in W_{j'}$. This implies that we can transform the $T$-reduced expression $(t_1, t_2,\dots,
t_k)$ by a sequence of commutations of adjacent letters into a $T$-reduced expression $(q_1, \dots, q_{\ell_1}, q_{\ell_1+1}, \dots,
q_{\ell_2}, \dots, q_{\ell_{m-1}}, \dots, q_k)$
of $w$ where $$\{ q_1, \dots, q_{\ell_1-1}\}\subseteq W_1, \dots, \{
q_{\ell_{m-1}}, \dots, q_{k}\}\subseteq W_m.$$ Note that since the set $\{ t_1,\dots, t_k\}$ generates $P$ we must have $$\langle q_1, \dots, q_{\ell_1-1}\rangle= W_1, \dots, \langle
q_{\ell_{m-1}}, \dots, q_{k}\rangle= W_m.$$ Setting $\ell_0=1$ and
$\ell_m=k+1$ we define $x_i:= q_{\ell_{i-1}}
q_{\ell_{i-1}+1}\cdots q_{\ell_i-1}$ for all $i=1,\dots, m$. Note that since the $W_i$'s are irreducible and parabolic we get $(3)$. As $(q_1,\dots q_k)\in\Red(w)$ is given by concatenating $T$-reduced expressions of
the $x_i$'s we have $\lt(w)=\lt(x_1)+\lt(x_2)+\dots+\lt(x_m)$ which
shows $(2)$. Since $x_i\in W_i$ for all $i$ and $P=W_1\times
W_2\times\cdots\times W_m$ we have $x_ix_j=x_jx_i$ for all $i,j=1,\dots,
m$, which shows $(1)$. 


It remains to show that the decomposition is unique up to the order of the factors. Hence assume that $w=y_1 y_2\cdots y_{m'}$ is another
decomposition of $w$ satisfying the three conditions of Proposition~\ref{main}. By the third condition each of the $y_i$'s
has a reduced expression generating an irreducible parabolic subgroup
$W_i'=P(y_i)$ of $W$ and concatenating them yields a reduced expression generating $P(w)$. By uniqueness of the decomposition $P(w)=W_1\times \cdots\times
W_m$ we must have $m=m'$ and there must exist a
permutation $\pi\in\mathfrak{S}_m$ such that $W_i'= W_{\pi(i)}$ for all $i$. Up
to reordering, we can therefore assume that $W_i'=W_i$ for all $i$. Since $x=x_1 x_2\cdots x_m=y_1 y_2\cdots y_m$ it follows by uniqueness of the decomposition of $w$ as element of the direct product $W_1\times W_2\times\cdots\times W_m$ that $x_i=y_i$ for all $i=1,\dots, m$. 


\end{proof}

\begin{rmq}
In type $A_n$ we recover the classical cycle decomposition. In that case each element $w$ satisfies Condition~\ref{con:1}. The second condition in Proposition~\ref{main} follows from \cite[Lemma 2.2]{Brady}.
\end{rmq}

\subsection{Finite Coxeter groups and their parabolic subgroups}
This section is devoted to recalling well-known facts on parabolic
subgroups of finite Coxeter groups and their connexion with finite root systems. Most of what we present
here is covered in~\cite{Bou}, though often in different notations. From now on we always assume $(W,S)$ to be finite.

Let $(W,S)$ be finite, of rank $n$. Let $\Phi$ be a root
system for $(W,S)$ in an $n$-dimensional Euclidean space $V$ with inner product $(\cdot,\cdot)$. Let $\Phi^+\subseteq\Phi$ be a positive
system. Recall that there is a one-to-one
correspondence between $T$ and $\Phi^+$ which we denote by
$t\mapsto\alpha_t$. Let $w\in W$ and let $$V^w:=\{v\in V~|~w(v)=v\}$$ be the subspace of $V$ consisting of the fixed
points under the action of $w$. The following well-known result is due to Carter~\cite[Lemma 3]{Carter}

\begin{lem}[Carter's Lemma]\label{carter}
Let $\alpha_{t_1}, \alpha_{t_2}, \dots, \alpha_{t_k}\in\Phi^+$. Then
$\{\alpha_{t_1}, \alpha_{t_2}, \dots, \alpha_{t_k}\}$ is linearly
independent if and only if $\lt(t_1 t_2\cdots t_k)=k$. In that case one
has $\dim V^{t_1 t_2\cdots t_k}=n-k$ and $W':=\langle t_1, t_2, \dots, t_k\rangle$ is a reflection subgroup of rank $k$ of $W$.
\end{lem}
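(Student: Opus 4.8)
The plan is to read everything off the linear action of $W$ on $V$, via the formula $t(v)=v-\tfrac{2(v,\alpha_t)}{(\alpha_t,\alpha_t)}\alpha_t$ for a reflection $t$ with root $\alpha_t$. The one formal ingredient is the inclusion
\[
\mathrm{im}(r_1r_2\cdots r_m-\mathrm{id})\subseteq\mathrm{span}(\alpha_{r_1},\dots,\alpha_{r_m}),
\]
valid for \emph{any} reflections $r_1,\dots,r_m$; it follows by induction on $m$ from $(w-\mathrm{id})v=r_1\bigl((w'-\mathrm{id})v\bigr)+(r_1-\mathrm{id})v$ with $w'=r_2\cdots r_m$, using that $r_1$ carries $\mathrm{span}(\alpha_{r_2},\dots,\alpha_{r_m})$ into $\mathrm{span}(\alpha_{r_1},\dots,\alpha_{r_m})$. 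Applied to a $T$-reduced expression of an arbitrary $u\in W$ this gives $\dim\mathrm{im}(u-\mathrm{id})\le\lt(u)$; applied to $t_1\cdots t_k$ it gives $\dim V^{t_1\cdots t_k}\ge n-k$. I would also record the orthogonal decomposition $V=V^u\oplus\mathrm{im}(u-\mathrm{id})$, immediate from $u$ being an isometry since $(u-\mathrm{id})^{*}=u^{-1}-\mathrm{id}$, so $\mathrm{im}(u-\mathrm{id})^{\perp}=V^{u}$.

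For ``linearly independent $\Rightarrow$ $\lt(t_1\cdots t_k)=k$ and $\dim V^{t_1\cdots t_k}=n-k$'', put $U:=\mathrm{span}(\alpha_{t_1},\dots,\alpha_{t_k})$ (of dimension $k$) and $w:=t_1\cdots t_k$, so $V=U\oplus U^{\perp}$ orthogonally. Each $t_i$ fixes $U^{\perp}$ pointwise and stabilises $U$, hence so does $w$, so $V^{w}=(V^{w}\cap U)\oplus U^{\perp}$. I would then show $V^{w}\cap U=\{0\}$: if $u\in U$ and $w(u)=u$, set $u_k:=u$ and $u_{j-1}:=t_j(u_j)=u_j-c_j\alpha_{t_j}$; telescoping gives $0=u-w(u)=\sum_{j=1}^{k}c_j\alpha_{t_j}$, so linear independence forces every $c_j=0$, whence $u_{j-1}=u_j$ for all $j$ and $(u,\alpha_{t_j})=(u_j,\alpha_{t_j})=0$ for all $j$, i.e.\ $u\in U\cap U^{\perp}=\{0\}$. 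Thus $V^{w}=U^{\perp}$, so $\dim V^{w}=n-k$ and $\dim\mathrm{im}(w-\mathrm{id})=k\le\lt(w)\le k$, giving $\lt(w)=k$. For the reflection-subgroup assertion, $W':=\langle t_1,\dots,t_k\rangle$ is generated by reflections, fixes $U^{\perp}$ pointwise, and acts faithfully on $U$ (an element of $W'$ trivial on both $U$ and $U^{\perp}$ is trivial), hence is a finite essential reflection group on the $k$-dimensional space $U$; its canonical simple system (in the sense of Dyer) consists of a basis of $U$, so $\rk(W')=k$.

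For the converse, ``$\lt(t_1\cdots t_k)=k\Rightarrow$ linearly independent'', I would argue contrapositively through the sharper bound $\lt(u)\le\dim\mathrm{im}(u-\mathrm{id})$ for all $u\in W$, proved by induction on $r:=\dim\mathrm{im}(u-\mathrm{id})$. The case $r=0$ is $u=\mathrm{id}$. For $r\ge1$: since $u\neq\mathrm{id}$ fixes no chamber, $V^{u}$ meets no chamber interior, so $V^{u}\subseteq\bigcup_{t\in T}H_t$; a subspace lying in a finite union of hyperplanes lies in one of them, say $V^{u}\subseteq H_\beta$, so the root $\alpha_\beta\in(V^{u})^{\perp}=\mathrm{im}(u-\mathrm{id})$. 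Choose $v$ with $(u-\mathrm{id})v=\alpha_\beta$; expanding $(uv,uv)=(v,v)$ with $uv=v+\alpha_\beta$ yields $\tfrac{2(v,\alpha_\beta)}{(\alpha_\beta,\alpha_\beta)}=-1$, hence $s_\beta(v)=v+\alpha_\beta=uv$, so $(s_\beta u)(v)=v$ while $v\notin V^{u}$; as $\alpha_\beta\perp V^{u}$ we also have $V^{u}\subseteq V^{s_\beta u}$, whence $\dim\mathrm{im}(s_\beta u-\mathrm{id})\le r-1$ and induction gives $\lt(u)\le1+\lt(s_\beta u)\le r$. Granting this, if the $\alpha_{t_i}$ were dependent then $\dim\mathrm{im}(t_1\cdots t_k-\mathrm{id})\le\dim\mathrm{span}(\alpha_{t_i})\le k-1$, so $\lt(t_1\cdots t_k)\le k-1$; the contrapositive is what we want, and when $\lt(t_1\cdots t_k)=k$ the already-proved direction supplies $\dim V^{t_1\cdots t_k}=n-k$.

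The main obstacle is exactly the inequality $\lt(u)\le\dim\mathrm{im}(u-\mathrm{id})$ — equivalently, that every element of a finite reflection group of rank $n$ is a product of at most $n$ reflections — and within it the geometric point that the fixed space of any non-identity element lies inside some reflection hyperplane, so that $\mathrm{im}(u-\mathrm{id})$ always contains a root; once a root $\alpha_\beta$ is available in the moved space, the isometry computation producing the length-decreasing factor $s_\beta$ is routine, and everything else is bookkeeping with the bilinear form. A reader content with a black box may instead cite Carter's original argument in \cite{Carter}.
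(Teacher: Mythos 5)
Your proof is correct, but note that the paper does not prove this lemma at all: it is quoted as Carter's result and the reader is referred to \cite[Lemma 3]{Carter}. What you have written is essentially a self-contained reconstruction of Carter's classical argument, organised around the two inequalities $\dim\mathrm{im}(w-\mathrm{id})\le\ell_T(w)$ (immediate from the span inclusion for any product of reflections) and $\ell_T(w)\le\dim\mathrm{im}(w-\mathrm{id})$ (via the geometric step that the fixed space of a non-identity element lies in some reflection hyperplane, producing a length-decreasing reflection factor), together with the direct-sum computation $V^w=U^\perp$ when the roots are independent. All the steps check out: the telescoping argument showing $V^w\cap U=0$ uses independence exactly where it should, the isometry computation giving $s_\beta(v)=uv$ is right, and the induction on $\dim\mathrm{im}(u-\mathrm{id})$ closes correctly since $V^{s_\beta u}\supseteq V^u+\mathbb{R}v$. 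Two spots are stated briskly but rest on standard facts: that a point in an open chamber has trivial stabiliser (so $V^u$ lies in the union of the hyperplanes), and that for a finite reflection subgroup the Dyer rank $|S'|$ used in the paper coincides with the dimension of the span of its roots, which here is $U$ because every reflection of $W'$ fixes $U^\perp$ pointwise; both are legitimate to invoke, though the second deserves the one-line justification you sketch, since the paper's notion of rank is defined via Dyer's canonical generators rather than geometrically. Compared with the paper's bare citation, your version buys a proof readable without Carter's paper, at the cost of importing the standard chamber-geometry facts; citing \cite{Carter} as the paper does is of course also fine.
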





The following will be useful (see~\cite[Lemma 1.2.1 (i)]{Dual})

\begin{lem}\label{ref}
Let $x\in W$, $t\in T$. Then $t\leq_T x\Leftrightarrow V^x\subseteq V^t.$
\end{lem}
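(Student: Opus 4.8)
The plan is to translate both sides of the equivalence into the language of the fixed space $V^x$ and its orthogonal complement $\mathrm{Mov}(x):=\mathrm{im}(x-1)$ (the \emph{moved space}), and then argue by comparing dimensions. I first record two consequences of Carter's Lemma~\ref{carter}: since $W$ acts orthogonally on $V$ one has the orthogonal decomposition $V=V^x\oplus\mathrm{Mov}(x)$, so $\mathrm{Mov}(x)=(V^x)^\perp$; and if $(t_1,\dots,t_k)\in\Red(x)$ then $\mathrm{Mov}(x)=\mathrm{span}(\alpha_{t_1},\dots,\alpha_{t_k})$. The inclusion "$\subseteq$" in the latter is a telescoping induction from the identity $x-1=(t_1-1)t_2\cdots t_k+(t_2\cdots t_k-1)$ together with $\mathrm{im}(t_i-1)=\mathbb{R}\alpha_{t_i}$; equality then holds because, by Carter's Lemma, the roots $\alpha_{t_i}$ are linearly independent and $\dim V^x=n-k$, so both sides have dimension $k$. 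In particular $\ell_T(x)=\dim\mathrm{Mov}(x)=n-\dim V^x$.

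For "$\Rightarrow$", assume $t\leq_T x$. Unravelling the definition gives $\ell_T(tx)=\ell_T(x)-1$, so prefixing $t$ to a $T$-reduced expression of $tx$ yields an element of $\Red(x)$ beginning with $t$; by the span description, $\alpha_t\in\mathrm{Mov}(x)=(V^x)^\perp$, hence $V^x\subseteq\alpha_t^\perp=V^t$. For "$\Leftarrow$", assume $V^x\subseteq V^t$, i.e.\ $\alpha_t\in(V^x)^\perp=\mathrm{Mov}(x)$. Since $t^2=1$ we have $tx-1=t(x-t)$, so $\mathrm{Mov}(tx)=t\bigl(\mathrm{im}(x-t)\bigr)$; from $x-t=(x-1)-(t-1)$ we get $\mathrm{im}(x-t)\subseteq\mathrm{Mov}(x)+\mathbb{R}\alpha_t=\mathrm{Mov}(x)$, and as $t$ is invertible this yields $\dim\mathrm{Mov}(tx)\leq\dim\mathrm{Mov}(x)$, i.e.\ $\ell_T(tx)\leq\ell_T(x)$. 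Combining with the trivial bound $|\ell_T(tx)-\ell_T(x)|\leq 1$ and the parity constraint $\ell_T(tx)\equiv\ell_T(x)+1\pmod 2$ (which follows from $\det(w)=(-1)^{\ell_T(w)}$, each reflection acting on $V$ with determinant $-1$) forces $\ell_T(tx)=\ell_T(x)-1$, that is, $t\leq_T x$.

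I do not expect a serious obstacle once Carter's Lemma is available: the argument is linear algebra around the decomposition $V=V^x\oplus\mathrm{Mov}(x)$. The two points deserving care are the identification $\mathrm{Mov}(x)=(V^x)^\perp$ (orthogonality of the $W$-action plus rank–nullity) and, in the converse direction, the final step: the dimension count only delivers $\ell_T(tx)\leq\ell_T(x)$, and one genuinely needs the determinant/parity argument to exclude $\ell_T(tx)=\ell_T(x)$ and conclude $\ell_T(tx)=\ell_T(x)-1$.
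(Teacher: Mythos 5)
Your argument is correct. Note, though, that the paper offers no proof of this lemma at all: it is quoted from Bessis, \emph{The dual braid monoid}, Lemma 1.2.1(i), so your self-contained derivation is genuinely additional content rather than a rephrasing of the paper's argument. What you write is essentially the standard Brady--Watt/Bessis-style proof: identify $\ell_T$ with $\dim\mathsf{Mov}$ via Carter's Lemma, observe that for any $(t_1,\dots,t_k)\in\Red(x)$ one has $\mathsf{Mov}(x)=\mathrm{span}(\alpha_{t_1},\dots,\alpha_{t_k})$ (your telescoping identity plus the dimension count), which gives the forward implication, and for the converse use $tx-1=t(x-t)$ together with $\alpha_t\in\mathsf{Mov}(x)$ to get $\ell_T(tx)\le \ell_T(x)$, then finish with the determinant parity $\det(w)=(-1)^{\ell_T(w)}$ and the triangle inequality. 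All steps check out; the only cosmetic point is that the orthogonal decomposition $V=V^x\oplus\mathsf{Mov}(x)$ is not a consequence of Carter's Lemma but plain linear algebra for orthogonal transformations (the paper states it with a reference to Armstrong), and your proof implicitly uses the standing finiteness assumption on $W$ (needed for the Euclidean representation), which is exactly the context in which the lemma is stated.
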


Given $w\in W$, there is an orthogonal decomposition $V=V^w\oplus\mathsf{Mov}(w)$ with respect to $(\cdot,\cdot)$, where $\mathsf{Mov}(w):=\mathrm{im}(w-1)$ (see for instance \cite[Section 2.4]{Arm}).

Recall that for finite Coxeter groups, Definition~\ref{defn:parabolic} is equivalent to the classical definition, that is, $W'\subseteq W$ is parabolic if and only if $W'$ is a conjugate of a standard parabolic subgroup $W_I$ of $W$, where for $I\subseteq S$ we write $W_I:=\langle s~|~s\in I\rangle$. There is the following result, characterizing parabolic subgroups of finite Coxeter groups as
centralizers of subspaces of $V$ (which is a Corollary of~\cite[3.3, Proposition 1]{Bou}).

\begin{proposition}\label{prop:bou}
Let $P\subseteq W$ be a parabolic subgroup. Then
$$P=\mathrm{Fix}(E):=\{x\in W~|~x(v)=v,~\forall v\in E\}$$ for some subspace
$E\subseteq V$. Conversely, given any subspace $E\subseteq V$, the
subgroup $\mathrm{Fix}(E)$ is a parabolic subgroup of
$W$.
\end{proposition}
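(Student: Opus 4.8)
The plan is to deduce the statement from the description of the pointwise stabilizer of a single point given in \cite[Ch.~V, §3.3, Prop.~1]{Bou}: for any $v\in V$ the subgroup $W_v:=\{x\in W\mid x(v)=v\}$ is generated by the reflections $s_\alpha$ it contains (those with $v\in\alpha^\perp$), and in fact $W_v=W_{I(v)}$ for the simple system attached to any chamber $C$ with $v\in\overline C$, where $I(v)=\{s\in S_C\mid s(v)=v\}$; in particular every such $W_v$ is a standard parabolic subgroup relative to a suitable chamber, hence parabolic in the classical sense, hence parabolic in the sense of Definition~\ref{defn:parabolic} (these notions agreeing for finite $W$, as recalled above).

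First I would prove the converse direction. Fix a subspace $E\subseteq V$. The key point is that, since $W$ is finite, for each $x\in W$ the set $E\cap V^x$ is either all of $E$ or a proper subspace of $E$; choosing $v\in E$ outside the finite union of the proper subspaces $E\cap V^x$, taken over the $x\in W$ not fixing $E$ pointwise, one gets $x(v)=v\Rightarrow E\subseteq V^x$, so that $\mathrm{Fix}(E)=W_v$. By the cited result of Bourbaki, $W_v$ is parabolic, and therefore so is $\mathrm{Fix}(E)$.

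For the forward direction, let $P$ be a parabolic subgroup, so $P=wW_Iw^{-1}$ for some $I\subseteq S$ and $w\in W$. Since $\mathrm{Fix}\bigl(w(V^{W_I})\bigr)=w\,\mathrm{Fix}(V^{W_I})\,w^{-1}$, it is enough to prove $W_I=\mathrm{Fix}(V^{W_I})$, where $V^{W_I}=\bigcap_{s\in I}\alpha_s^\perp$ denotes the fixed space of $W_I$. The inclusion $W_I\subseteq\mathrm{Fix}(V^{W_I})$ is clear. For the reverse inclusion, apply the converse direction (and the Bourbaki statement) to see that $\mathrm{Fix}(V^{W_I})$ is generated by the reflections $s_\alpha$ with $\alpha\perp V^{W_I}$; since $(V^{W_I})^\perp=\mathrm{span}(\alpha_s\mid s\in I)$, such an $\alpha\in\Phi$ lies in the parabolic subsystem $\Phi_I:=\Phi\cap\mathrm{span}(\alpha_s\mid s\in I)$, and the reflections $s_\alpha$ with $\alpha\in\Phi_I$ are exactly the reflections of $W_I$ (using $\Phi\cap\mathrm{span}(\Phi_I)=\Phi_I$, a standard fact on parabolic subsystems of finite root systems). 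Hence $\mathrm{Fix}(V^{W_I})\subseteq W_I$, and equality follows.

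The main obstacle is not in these reductions but in the genericity step of the converse direction: one must be certain that stabilizing the whole subspace $E$ can be replaced by stabilizing a single sufficiently general point of $E$, which is precisely where finiteness of $W$ enters and which is what allows the point-stabilizer version of the result to be quoted. Once that reduction is secured, everything else is bookkeeping with root subsystems.
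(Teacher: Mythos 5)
Your proof is correct, and it follows the route the paper itself intends: the paper states Proposition~\ref{prop:bou} without proof, as a corollary of \cite[Ch.~V, \S 3.3, Prop.~1]{Bou}, and your argument is exactly that deduction made explicit (the finiteness/genericity step replacing the fixator of a subspace $E$ by the stabilizer $W_v$ of a generic $v\in E$, plus the standard identification $\Phi\cap\mathrm{span}(\alpha_s\mid s\in I)=\Phi_I$ for the forward direction). No gaps worth flagging.
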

In fact, the subspaces $E$ in Proposition~\ref{prop:bou} can be chosen to be
intersections of reflection hyperplanes. Given linearly independent reflection hyperplanes $V_{t_1}, \dots, V_{t_k}$ where $t_i\in T, i=1,\dots,k$ and setting $w:= t_1 t_2\cdots
t_k$, it follows from Carter's Lemma (Lemma~\ref{carter}) that
$V^w=\bigcap_{i=1}^k V_{t_i},$
and $\dim (V^w)=n-\lt(w)$ (see also \cite[Lemma 1.2.1 (ii)]{Dual}). It follows from the discussion above that
$P(w)=\mathrm{Fix} (V^w)$ and $\rk(P(w))=\lt(w)$. Every parabolic subgroup is
the parabolic closure of some (in general not uniquely determined) element.

\subsection{Hurwitz action and proof of Theorem~\ref{mainfinite}}\label{hurwitz}

We now give a few properties of elements of finite Coxeter groups satisfying Condition~\ref{con:1} before proving Theorem~\ref{mainfinite}. These elements were introduced in \cite{BGRW} and called \textit{quasi-Coxeter elements}. 


Recall that for each $w\in W$ with $\ell_T(w)=k$, there is an action of the $k$-strand Artin braid group $\mathcal{B}_k$ on $\Red(w)$ called the \emph{Hurwitz action}, defined as follows. The Artin generator $\sigma_i\in \mathcal{B}_k$ acts by $$\sigma_i\cdot (t_1, \dots, t_{i-1}, t_i, t_{i+1}, t_{i+2}, \dots, t_k)=(t_1, \dots, t_{i-1}, t_i t_{i+1} t_i, t_{i}, t_{i+2}, \dots, t_k).$$

In \cite[Theorem 1.1]{BGRW}, it is shown that this action is transitive if and only if $w$ satisfies Condition~\ref{con:1}. Since the reflection subgroup generated by the reflections from a reduced expression is invariant under a Hurwitz move as above, this means that either every $T$-reduced expression generates a parabolic subgroup (which by Lemma~\ref{lem:parabw} is nothing but $P(w)$) or no reduced expression does.  

With the following Proposition (which requires the above mentioned result from \cite{BGRW} for which we only have a case-by-case proof) it will be easy to derive a proof of Theorem~\ref{mainfinite} 

\begin{prop}\label{lemquasi}
Let $(W,S)$ be a finite irreducible Coxeter system. Let $x$ be a quasi-Coxeter element in $W$. Then there is no nontrivial decomposition $x=uv=vu$ such that $u,v\in W$ and $$\ell_T(x)=\ell_T(u)+\ell_T(v).$$
\end{prop}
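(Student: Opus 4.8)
The plan is to argue by contradiction, using the geometric characterization of parabolic subgroups in finite Coxeter groups together with the fact that a quasi-Coxeter element is a parabolic quasi-Coxeter element whose parabolic closure is all of $W$. So suppose $x$ is a quasi-Coxeter element in the finite irreducible Coxeter system $(W,S)$ and that $x = uv = vu$ is a nontrivial decomposition with $\ell_T(x) = \ell_T(u) + \ell_T(v)$. First I would observe that since $x$ is quasi-Coxeter, Condition~\ref{con:1} holds with $P(x) = W$, so $\ell_T(x) = \rk(P(x)) = n$, and more importantly $V^x = \mathrm{Fix}^{-1}(W) = \{0\}$; equivalently $\mathsf{Mov}(x) = V$.

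Next I would show that the hypothesis $\ell_T(x) = \ell_T(u) + \ell_T(v)$ forces $u \leq_T x$ and $v \leq_T x$ (indeed, $x = uv$ with lengths adding is exactly $u \leq_T x$, and since $u$ and $v$ commute we symmetrically get $v = ux u^{-1}\cdot\text{(something)}$... more carefully, from $u \leq_T x$ we get $u^{-1}x = v$ has $\ell_T(v) = \ell_T(x) - \ell_T(u)$, and commutativity gives the analogous statement for $v$). From $u \leq_T x$ and Lemma~\ref{ref} applied reflection-by-reflection (writing a $T$-reduced expression of $u$ and extending it to one of $x$), one gets $V^x \subseteq V^u$, hence $V^u \supseteq V^x = \{0\}$, which is automatic and not yet a contradiction — so the key point must instead come from combining commutativity with the direct-product structure. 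The real input is Lemma~\ref{lem:product} and Proposition~\ref{prop:bou}: I would take $T$-reduced expressions of $u$ and of $v$, concatenate them to get one of $x$, and let $P(u)$, $P(v)$ be... wait, $u$ and $v$ need not satisfy Condition~\ref{con:1}. Instead I would pass to the parabolic closures in the geometric sense: $\mathrm{Fix}(V^u)$ and $\mathrm{Fix}(V^v)$ are parabolic subgroups containing $u$ and $v$ respectively, $V = \mathsf{Mov}(x) \subseteq \mathsf{Mov}(u) + \mathsf{Mov}(v)$ with the dimension count $\dim\mathsf{Mov}(u) + \dim\mathsf{Mov}(v) = \ell_T(u)+\ell_T(v) = n$ forcing $V = \mathsf{Mov}(u)\oplus\mathsf{Mov}(v)$ and $V^u \cap V^v = V^x = \{0\}$ with $V^u = \mathsf{Mov}(v)$, $V^v = \mathsf{Mov}(u)$ orthogonal complements. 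Then $\mathrm{Fix}(V^u)$ and $\mathrm{Fix}(V^v)$ are parabolic subgroups whose reflections lie in complementary subspaces; I claim $\mathrm{Ref}(\mathrm{Fix}(V^u))$ and $\mathrm{Ref}(\mathrm{Fix}(V^v))$ are disjoint and together with commutativity this exhibits a nontrivial direct-product decomposition of a parabolic subgroup containing $x$ — but any parabolic subgroup containing $x$ contains $P(x) = W$, so $W = \mathrm{Fix}(V^u)\times\mathrm{Fix}(V^v)$ would be a nontrivial direct-product decomposition of $W$, contradicting irreducibility by Lemma~\ref{lem:product}. The nontriviality (both factors proper) comes from $u \neq 1 \neq v$, which gives $\mathsf{Mov}(u), \mathsf{Mov}(v) \neq 0$.

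The main obstacle I anticipate is the step asserting that $\langle \mathrm{Fix}(V^u), \mathrm{Fix}(V^v)\rangle$ (or a suitable parabolic containing both $u$ and $v$) decomposes as the internal direct product $\mathrm{Fix}(V^u) \times \mathrm{Fix}(V^v)$: one must check that these two parabolic subgroups commute elementwise and intersect trivially, using that their reflections live in orthogonal subspaces $\mathsf{Mov}(v)$ and $\mathsf{Mov}(u)$. Commuting should follow because a reflection $t \in \mathrm{Fix}(V^u)$ has $\alpha_t \in \mathsf{Mov}(v) = (V^u)^\perp$, a reflection $t' \in \mathrm{Fix}(V^v)$ has $\alpha_{t'} \in \mathsf{Mov}(u) = (V^v)^\perp$, and $(V^u)^\perp \perp (V^v)^\perp$ would need $\mathsf{Mov}(u)\perp\mathsf{Mov}(v)$, i.e. $(\alpha_t,\alpha_{t'}) = 0$, whence $tt' = t't$; trivial intersection follows since an element fixing both $V^u$ and $V^v$ fixes $V^u + V^v = V$. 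Finally, to conclude $W = \mathrm{Fix}(V^u) \times \mathrm{Fix}(V^v)$ I would invoke that this product is a parabolic subgroup (a product of parabolics along an orthogonal decomposition of $V$, via Proposition~\ref{prop:bou} applied to $E = 0$) containing $x$, hence contains $P(x) = W$; combined with the reverse inclusion this gives equality, and then Lemma~\ref{lem:product} applied to $W$ itself contradicts irreducibility of $(W,S)$. Care is needed to ensure the two factors are genuinely nontrivial, but $x = uv$ nontrivial with lengths adding guarantees $\ell_T(u), \ell_T(v) \geq 1$, so $\mathrm{rk}\,\mathrm{Fix}(V^u), \mathrm{rk}\,\mathrm{Fix}(V^v) \geq 1$.
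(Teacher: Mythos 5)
Your argument has two gaps, one repairable and one that breaks the strategy. The repairable one: the dimension count does \emph{not} give $V^u=\mathsf{Mov}(v)$ and $V^v=\mathsf{Mov}(u)$. From $\mathsf{Mov}(x)\subseteq\mathsf{Mov}(u)+\mathsf{Mov}(v)$ and $\dim\mathsf{Mov}(u)+\dim\mathsf{Mov}(v)=n$ you only get a (possibly oblique) direct sum $V=\mathsf{Mov}(u)\oplus\mathsf{Mov}(v)$; orthogonality, equivalently $\mathsf{Mov}(v)\subseteq V^u$, genuinely needs the hypothesis $uv=vu$, which your chain of deductions never invokes (for instance $u=s$, $v=t$ adjacent simple reflections in $A_2$ satisfy the additive-length and dimension conditions but $V^u\neq\mathsf{Mov}(v)$). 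This is exactly the paper's first step: for $a\in\mathsf{Mov}(v)$, commutativity gives $u(a)\in\mathsf{Mov}(v)$, so $u(a)-a\in\mathsf{Mov}(u)\cap\mathsf{Mov}(v)=0$, whence $a\in V^u$; that fix is short and available to you.

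The fatal gap is the endgame. You assert that $\mathrm{Fix}(V^u)\times\mathrm{Fix}(V^v)$ is a parabolic subgroup containing $x$, hence contains $P(x)=W$. A product of two parabolic subgroups with orthogonal root spaces need not be parabolic, and Proposition~\ref{prop:bou} with $E=0$ only says that $W$ itself is parabolic. Concretely, take $W$ of type $B_2$, $x=w_0=-\mathrm{id}=s_{e_1}s_{e_2}$, $u=s_{e_1}$, $v=s_{e_2}$: here $uv=vu$, $\ell_T(x)=\ell_T(u)+\ell_T(v)$, $\mathsf{Mov}(x)=V$, and the only parabolic subgroup containing $x$ is $W$ -- i.e.\ every fact your argument actually uses holds -- yet $\mathrm{Fix}(V^u)\times\mathrm{Fix}(V^v)$ is the reflection subgroup of type $A_1\times A_1$, which contains $x$ but is \emph{not} parabolic, and no contradiction with irreducibility arises; indeed $w_0$ is decomposable because it is not quasi-Coxeter. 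So any correct proof must use quasi-Coxeterness beyond $V^x=0$, and this is where the paper's proof brings in the hard input, the Hurwitz transitivity theorem of \cite[Theorem 1.1]{BGRW}: every $t\in T$ satisfies $t\leq_T x$ (as $V^x=0$) and hence starts some $T$-reduced expression of $x$; by transitivity that expression lies in the Hurwitz orbit of the concatenation of reduced expressions of $u$ and $v$, whose letters always stay in $W_u\cup W_v$ because the two blocks of reflections commute; therefore $\mathrm{Ref}(W)=\mathrm{Ref}(W_u)\,\dot{\cup}\,\mathrm{Ref}(W_v)$ and $W=W_u\times W_v$, contradicting irreducibility. Your purely geometric route cannot substitute for this step.
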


\begin{proof}
Assume that there is such a decomposition $x=uv$. For $y\in\{ u,v\}$ define $$W_y:=\langle t\in T~|~t\leq_T y\rangle.$$
We claim that in that case we have $W=W_u\times W_v$ and $$\mathrm{Ref}(W)=\mathrm{Ref}(W_u)\dot{\cup} \mathrm{Ref}(W_v),$$ contradicting the irreducibility of $W$. 

Firstly we show that $\mathsf{Mov}(v)\subseteq V^u$. We have $V^{u}\cap V^{v}\subseteq V^{uv}$, hence $\mathsf{Mov}(uv)\subseteq \mathsf{Mov}(u)+\mathsf{Mov}(v)$. Since moreover $\ell_T(x)=\ell_T(uv)=\ell_T(u)+\ell_T(v)$ by Carter's Lemma we have $\mathsf{Mov}(u)\cap\mathsf{Mov}(v)=0$. Let $a\in\mathsf{Mov}(v)$. Then since $uv=vu$ we have $u(a)\in\mathsf{Mov}(v)$, hence $u(a)-a\in\mathsf{Mov}(v)\cap\mathsf{Mov}(u)=0$. Hence $a\in V^u$, which shows the claimed inclusion. 

Now if $t\in T$ is such that $t\leq_T u$, then by Lemma~\ref{ref} we have that $\mathsf{Mov}(v)\subseteq V^u\subseteq V^t$ which implies that $\alpha_t\in V^v$. Using~\ref{ref} again, we deduce that $t$ commutes with any reflection $t'\in T$ such that $t'\leq_T v$.

Concatenating a $T$-reduced expression $t_1 t_2\cdots t_k$ of $u$ with a $T$-reduced expression $t_{k+1} t_{k+2}\cdots t_n$ of $v$ we get a $T$-reduced expression $t_1 t_2\cdots t_n$ of $x$. By the discussion above we have $tt'=t't$ for all $t\in\{t_1, t_2,\dots, t_k\}$, $t'\in\{t_{k+1}, t_{k+2},\dots, t_n\}$ and any reflection occurring in a $T$-reduced expression in the orbit $\mathcal{B}_n\cdot (t_1,t_2,\dots, t_n)$ lies either in $W_u$ or in $W_v$. Since by \cite[Theorem 1.1]{BGRW} there is a unique Hurwitz orbit on $\Red(x)$, the set $\{ t_1, t_2,\dots, t_n\}$ generates $W$. Let $t\in T$. Since a quasi-Coxeter element $x$ has no nontrivial fixed point in $V$ we have that $t\leq_T x$, hence $t$ occurs in a reduced expression of $\Red(x)$. But there is only one orbit $\mathcal{B}_n\cdot (t_1, t_2,\dots, t_k)$ of $\mathcal{B}_n$ on $\Red(w)$. It follows that $t\in W_u$ or in $t\in W_v$. The claimed direct product decomposition follows. 
\end{proof}

\begin{proof}[Proof of Theorem~\ref{mainfinite}]
The existence of the claimed decomposition $w=x_1 x_2\cdots x_m$ is given by Propositions~\ref{main} and \ref{lemquasi}. It remains to show uniqueness. Assume that $w=y_1y_2\cdots y_\ell$ is another such decomposition. For each $i$, choose a reduced expression $t_1^i\cdots t_{n_i}^i$ of $y_i$. Thanks to $(2)$, concatenating these reduced expressions yields a reduced expression of $w$. For a fixed $i$, all the $t_i^j$ are in $P(w)$ by Theorem~\ref{thm:parabreduit}. It follows from Lemma~\ref{lem:parabw} that $t_i^j$ lies in one of the parabolic factor $W(i,j)$ of $P(w)$, and indecomposability of $y_i$ forces $W(i,j)$ to depend only on $i$. Therefore we set $W(i):=W(i,j)$ and note that $y_i\in W(i)$. But since the irreducible parabolic factors of $P(w)$ are precisely the $P(x_j)$'s, for each $W(i)$ there exists $j$ such that $W(i)=P(x_j)$. It follows from the decomposition \begin{equation}\label{eq:1}
P(w)=P(x_1)\times P(x_2)\times\dots P(x_m)
\end{equation}
and the uniqueness of the decomposition of $w$ in the direct product \eqref{eq:1} that each of the $x_j$ can be written as a product of $y_i$'s: more precisely, $x_j$ is the product of all those $y_i$'s such that $W(i)=P(x_j)$ (and for each $j$, there must be at least one $y_i$ such that $W(i)=P(x_j)$ otherwise $w$ would have two distinct decompositions in \eqref{eq:1}). But by indecomposability of $x_j$, there can be at most one such $y_i$, which concludes.      

\end{proof}



\subsection{Consequences}\label{sec:cons}

We conclude with some facts about elements for which
Condition~\ref{con:1} fails. For such an element $w\in W$, by \cite[Theorem 1.1]{BGRW} the Hurwitz operation on $\Red(w)$ is not transitive. Denote by $\mathcal{H}(w)$ the set of orbits. For $\mathcal{O}\in\mathcal{H}(w)$, denote by $\langle \mathcal{O} \rangle$ the reflection subgroup generated by any $T$-reduced expression in $\mathcal{O}$. This is well-defined since the Hurwitz action leaves the subgroup generated by a $T$-reduced expression unchanged. Hence we get the following:

\begin{prop}\label{cor:decn}
Let $(W,S)$ be finite. Let $w\in W$. For each $\mathcal{O}\in\mathcal{H}(w)$, the element $w$ has a unique generalized cycle decomposition in the sense of Theorem~\ref{mainfinite} in the Coxeter group $\langle \mathcal{O} \rangle$. 
\end{prop}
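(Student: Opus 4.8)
The plan is to reduce Proposition~\ref{cor:decn} directly to Theorem~\ref{mainfinite}. The key observation is that although $w$ itself need not satisfy Condition~\ref{con:1} in $W$, once we pass to the reflection subgroup $W':=\langle\mathcal O\rangle$ generated by a $T$-reduced expression lying in a fixed Hurwitz orbit $\mathcal O$, the element $w$ \emph{does} satisfy the analogue of Condition~\ref{con:1} inside $W'$. Indeed, by construction $W'$ is generated by the reflections of some $(t_1,\dots,t_k)\in\mathcal O\subseteq\Red(w)$, and $W'$ is trivially a parabolic subgroup of itself (it is generated by its full canonical simple system, in the sense of Definition~\ref{defn:parabolic} applied to the Coxeter system $(W',S')$ from \cite{Dyer}). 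Hence $w$ is a quasi-Coxeter element of the finite Coxeter group $W'$.

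First I would record that the Hurwitz action on $\Red(w)$, computed inside $W$, agrees with the Hurwitz action on $\mathrm{Red}_{T'}(w)$ computed inside the reflection subgroup $W'$, where $T'=W'\cap T$ is the reflection set of $W'$: the Hurwitz moves only ever conjugate reflections appearing in the word by other such reflections, so every word Hurwitz-equivalent to $(t_1,\dots,t_k)$ has all its letters in $T'$, and conversely. More precisely, $\mathcal O$ is exactly the set of $T'$-reduced expressions of $w$ as an element of $W'$ — here one needs $\ell_{T'}(w)=\ell_T(w)$, which holds because $W'$ is a reflection subgroup and reduced $T$-words for $w\in W'$ using letters of $T'$ have the same length $k$; and since $\mathcal O$ is a single Hurwitz orbit that, by \cite[Theorem 1.1]{BGRW} applied to $W'$, forces transitivity of the Hurwitz action on $\mathrm{Red}_{T'}(w)$, i.e.\ $w$ is a (parabolic) quasi-Coxeter element of $W'$ and $\mathcal O=\mathrm{Red}_{T'}(w)$.

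Next I would simply invoke Theorem~\ref{mainfinite} for the finite Coxeter system $(W',S')$ and the element $w$, which is a parabolic quasi-Coxeter element there: it yields a decomposition $w=x_1x_2\cdots x_m$ with the three stated properties (pairwise commutation, additivity of $\ell_{T'}$, and indecomposability of each $x_i$), unique up to reordering. The only point requiring a word of care is that the statements of Theorem~\ref{mainfinite} are phrased in terms of $\ell_T$ rather than $\ell_{T'}$, so I would note that for $u\in W'$ one has $\ell_T(u)=\ell_{T'}(u)$ (again because $\mathrm{Red}_{T'}(u)\subseteq\Red(u)$ and both consist of words of the same common length, the latter by Theorem~\ref{thm:parabreduit}-type reasoning for reflection subgroups, or more elementarily by Carter's Lemma on linear independence of the associated roots inside the span). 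Consequently conditions (1)--(3) of Theorem~\ref{mainfinite} for $w$ inside $W'$ are literally the conditions of the ``generalized cycle decomposition in $\langle\mathcal O\rangle$'' being claimed.

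The main obstacle — really the only nontrivial point — is matching up the two Hurwitz actions and the two length functions so that Theorem~\ref{mainfinite}, which is stated for $w$ viewed as a quasi-Coxeter element of the ambient group, can be applied with the ambient group taken to be $W'=\langle\mathcal O\rangle$ instead of $W$; everything else is bookkeeping. One should double-check that $W'$ being a reflection subgroup of a finite Coxeter group is itself a finite Coxeter group (clear, since it is a finite group generated by reflections, with Coxeter structure by \cite{Dyer}), so that Theorem~\ref{mainfinite} genuinely applies. With these compatibilities in hand the proof is a one-line appeal to Theorem~\ref{mainfinite}.
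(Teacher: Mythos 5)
Your proposal is correct and follows essentially the same route as the paper: pass to $W'=\langle\mathcal O\rangle$ with its canonical Coxeter structure and reflection set $T'=W'\cap T$ (Dyer), observe that $w$ is a quasi-Coxeter element of the finite Coxeter group $W'$ since a word of $\mathcal O$ has letters in $T'$, is automatically $T'$-reduced, and generates $W'$ (which is parabolic in itself), and then apply Theorem~\ref{mainfinite} to $(W',S')$. Your additional compatibility checks (that $\ell_T$ and $\ell_{T'}$ agree on $W'$ and that $\mathcal O=\mathrm{Red}_{T'}(w)$) are correct but not needed for the statement itself; the paper records them separately in Lemmas~\ref{lem:long} and~\ref{orbites}.
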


\begin{proof}
The reflection subgroup $\langle\mathcal{O}\rangle$ is a Coxeter group. Denote by $S'$ its set of canonical Coxeter generators. We have (see \cite[Corollary 3.11
(ii)]{Dyer}) that $$T':=\langle\mathcal{O}\rangle\cap T =
\bigcup_{u\in\langle\mathcal{O}\rangle} u S' u^{-1}.$$
Hence $w$ is a quasi-Coxeter element in $(\langle\mathcal{O}\rangle,S')$. Applying Theorem~\ref{mainfinite} to $w$ viewed as element of $\langle\mathcal{O} \rangle$ we get the claim.  

\end{proof}

\begin{lem}\label{orbites}
Let $w\in W$. If $\mathcal{O}_1, \mathcal{O}_2\in\mathcal{H}(w)$ with
$\mathcal{O}_1\neq \mathcal{O}_2$, then $\langle\mathcal{O}_1\rangle\neq
\langle\mathcal{O}_2\rangle.$
\end{lem}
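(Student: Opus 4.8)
The plan is to argue by contradiction, the key idea being that if two distinct Hurwitz orbits generated the same reflection subgroup $W'$, then $w$ would be a quasi-Coxeter element of $W'$, and the transitivity result of \cite[Theorem 1.1]{BGRW} applied \emph{inside} $W'$ would force a single Hurwitz orbit, contradicting $\mathcal{O}_1\neq\mathcal{O}_2$.

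First I would set up the ambient reflection subgroup. Suppose $\langle\mathcal{O}_1\rangle=\langle\mathcal{O}_2\rangle=:W'$. Picking $(t_1,\dots,t_k)\in\mathcal{O}_i$ with $k=\ell_T(w)$, the equality $w=t_1\cdots t_k$ shows $w\in\langle\mathcal{O}_i\rangle=W'$. Since $(W,S)$ is finite, $W'$ is a finite reflection subgroup, hence (by Dyer's theory, as recalled after Definition~\ref{defn:parabolic}) a finite Coxeter group with canonical simple system $S'$ and set of reflections $T':=W'\cap T$. Each $t_j$ above lies in $W'\cap T=T'$, so $(t_1,\dots,t_k)$ writes $w$ as a product of $k$ elements of $T'$; together with $T'\subseteq T$ this gives $\ell_{T'}(w)=\ell_T(w)=k$ and $(t_1,\dots,t_k)\in\mathrm{Red}_{T'}(w)$. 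Hence $\mathcal{O}_1,\mathcal{O}_2\subseteq\mathrm{Red}_{T'}(w)\subseteq\Red(w)$.

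Next I would compare the two Hurwitz actions. The Hurwitz action of $\mathcal{B}_k$ on $\mathrm{Red}_{T'}(w)$ and on $\Red(w)$ is given by one and the same formula, and $\mathrm{Red}_{T'}(w)$ is stable under Hurwitz moves (a move conjugates a letter of $T'$ by a letter of $W'$, landing again in $T'$); so $\mathrm{Red}_{T'}(w)$ is a union of Hurwitz orbits of $\Red(w)$, and $\mathcal{O}_1,\mathcal{O}_2$ are two distinct \emph{full} Hurwitz orbits for the action on $\mathrm{Red}_{T'}(w)$. On the other hand, for $(t_1,\dots,t_k)\in\mathcal{O}_1$ we have $\langle t_1,\dots,t_k\rangle=\langle\mathcal{O}_1\rangle=W'$, and $W'$ is a parabolic subgroup of itself (take $m=n=|S'|$ in Definition~\ref{defn:parabolic}); thus $w$ is a quasi-Coxeter element of $(W',S')$. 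Applying \cite[Theorem 1.1]{BGRW} to the finite Coxeter system $(W',S')$, the Hurwitz action on $\mathrm{Red}_{T'}(w)$ is transitive, so there is exactly one orbit, contradicting $\mathcal{O}_1\neq\mathcal{O}_2$.

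I expect the only point requiring care to be the verification that each $\mathcal{O}_i$ is a \emph{complete} Hurwitz orbit for the restricted action on $\mathrm{Red}_{T'}(w)$, rather than just a union of smaller ones. This is immediate, though: the orbit of a tuple under Hurwitz moves is computed by an explicit formula that makes no reference to the ambient Coxeter group, so the $W$-orbit and the $W'$-orbit of a tuple lying in $\mathrm{Red}_{T'}(w)$ literally coincide. Everything else is routine bookkeeping with reflection lengths and Dyer's description of $T'=W'\cap T$.
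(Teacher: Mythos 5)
Your proof is correct and follows essentially the same route as the paper: assume $\langle\mathcal{O}_1\rangle=\langle\mathcal{O}_2\rangle=:W'$, observe that $w$ is a quasi-Coxeter element of the finite Coxeter group $W'$ with reflections $T'=W'\cap T$, and apply \cite[Theorem 1.1]{BGRW} inside $W'$ to get transitivity of the Hurwitz action on $\mathrm{Red}_{T'}(w)$, which forces the two orbits to coincide. Your extra verifications (that the tuples are $T'$-reduced and that the orbits for the restricted action are full orbits) are details the paper leaves implicit, but the argument is the same.
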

\begin{proof}
Let $(t_1, t_2,\dots, t_k)\in \mathcal{O}_1$. Then $\langle t_1, t_2,
\dots, t_k\rangle=\langle\mathcal{O}_1\rangle.$ Let $S'$ be the set of canonical Coxeter generators of the reflection subgroup
$\langle\mathcal{O}_1\rangle$. The Hurwitz operation is transitive on $\mathrm{Red}_{T_1}(w)$ where $T_1=T\cap \langle\mathcal{O}_1\rangle$ (see the proof of Proposition~\ref{cor:decn}). If we have $\langle \mathcal{O}_1\rangle=\langle \mathcal{O}_2\rangle$, then in particular for $(q_1, q_2,\dots, q_k)\in\mathcal{O}_2$ we have $q_i\in T_1$ for all $i$ since $q_i\in T\cap \langle \mathcal{O}_2\rangle=T_1$. This implies that $(t_1,t_2,\dots,t_k)$ and $(q_1,q_2,\dots,q_k)$ lie in the same Hurwitz orbit since the Hurwitz operation is transitive on $\mathrm{Red}_{T_1}(w)$, a contradiction. 

\end{proof}

\begin{rmq}
Proposition~\ref{cor:decn} tells us that any $w\in W$ has a unique cycle decomposition in any reflection subgroup generated by one of its $T$-reduced expressions. However distinct such reflection subgroups, equivalently (by Lemma~\ref{orbites}) distinct Hurwitz orbits in $\mathcal{H}(w)$ can yield the same decomposition of $w$: more precisely if $x_1, x_2, \dots, x_m$ is a cycle decomposition in $\langle \mathcal{O}_1\rangle$ and $y_1, y_2, \dots, y_\ell$ is a cycle decomposition in $\langle\mathcal{O}_2\rangle$ where $\mathcal{O}_1$ and $\mathcal{O}_2$ are distinct elements in $\mathcal{H}(w)$, then it is possible that $\ell=m$ and $$\{ x_1, x_2,\dots, x_m\}=\{ y_1, y_2,\dots, y_m\}.$$
As an example consider $W$ of type $G_2=I_2(6)$ with $S=\{ s,t\}$. Then $w=stst$ is a Coxeter element in both irreducible reflection subgroups $\langle s, tst\rangle$ and $\langle t, sts\rangle$ of type $A_2$ hence its cycle decomposition is the trivial decomposition $x_1=stst=y_1$ in both subgroups. 

However we always have $$\{ P(x_1)^1, P(x_2)^1,\dots, P(x_m)^1\}\neq\{ P(y_1)^2, P(y_2)^2,\dots, P(y_m)^2\},$$
where $P(x_i)^1$ (resp. $P(y_i)^2$) is the parabolic closure of $x_i$ (resp. $y_i$) in $\langle \mathcal{O}_1 \rangle$ (resp. $\langle \mathcal{O}_2\rangle$). Indeed, the reflections in these subgroups have to generate the parent group $\langle\mathcal{O}_1\rangle$ (resp. $\langle\mathcal{O}_2\rangle$) in which the cycle decomposition is considered and they are distinct by Lemma~\ref{orbites}. In the above example with $W$ of type $G_2$ we have $P(x_1)^1=\langle s, tst\rangle\neq P(y_1)^2=\langle t, sts\rangle$. 

\end{rmq}

Also note the following 

\begin{lem}\label{lem:long}
Let $w\in W$ with $\ell_T(w)=k$. Let $W'\subseteq W$ be a reflection subgroup of $W$ containing $w$ with set of reflections $T'=W'\cap T$. Then $\ell_T(w)=\ell_{T'}(w).$
\end{lem}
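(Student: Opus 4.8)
The plan is to prove the equality by comparing reflection lengths with $\dim\mathsf{Mov}$, via Carter's Lemma (Lemma~\ref{carter}) applied both inside $W$ and inside $W'$. One inequality is free: since $T'=W'\cap T\subseteq T$, any expression of $w$ as a product of $m$ reflections from $T'$ is also an expression of $w$ as a product of $m$ reflections from $T$, whence $\ell_T(w)\le\ell_{T'}(w)$.

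For the converse I would pass to the geometry. Let $V'\subseteq V$ be the linear span of $\{\alpha_t\mid t\in T'\}$. This set is stable under $W'$, so $W'$ preserves $V'$; and each $t\in T'$ fixes $(V')^{\perp}$ pointwise, so $W'$ acts trivially on $(V')^{\perp}$ and $V=V'\oplus(V')^{\perp}$ is a $W'$-equivariant orthogonal decomposition. By Dyer's theory of reflection subgroups (\cite{Dyer}, \cite[Corollary 3.11]{Dyer}), $(W',S')$ is a Coxeter system with reflection set $T'=W'\cap T$, the $W'$-action on $V'$ is a copy of its geometric representation, a root system for $(W',S')$ being given by the $\alpha_t$, $t\in T'$, up to signs and positive rescalings, and $\dim V'=|S'|=\rk(W')$. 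Now fix a $T'$-reduced expression $w=t_1t_2\cdots t_k$, so $k=\ell_{T'}(w)$; applying Carter's Lemma inside $(W',V')$ gives that the roots of $(W',S')$ associated to $t_1,\dots,t_k$ are linearly independent (signs are irrelevant here) and that $\dim(V'\cap V^w)=\dim V'-k$, i.e.\ $\dim\bigl(\mathrm{im}(w-1|_{V'})\bigr)=k$. On the other hand, for $v=v_1+v_2$ with $v_1\in V'$ and $v_2\in(V')^{\perp}$ one has $w(v)-v=w(v_1)-v_1\in V'$, so $\mathsf{Mov}(w)=\mathrm{im}(w-1)=\mathrm{im}(w-1|_{V'})$ and hence $\dim\mathsf{Mov}(w)=k$. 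Finally, running Carter's Lemma inside $(W,V)$ on a $T$-reduced expression of $w$ and using the orthogonal decomposition $V=V^w\oplus\mathsf{Mov}(w)$ gives $\dim\mathsf{Mov}(w)=\ell_T(w)$. Chaining these equalities yields $\ell_{T'}(w)=k=\dim\mathsf{Mov}(w)=\ell_T(w)$ directly, so in fact the trivial inequality above is not even needed (though it is reassuring).

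The step needing the most care is the invocation of Dyer's theory to guarantee that $W'$ acting on $V'$ really meets the hypotheses of Carter's Lemma — concretely, that $\dim V'=\rk(W')$ and that $\{\alpha_t\mid t\in T'\}$ is, up to normalization and signs, a root system for $(W',S')$. Neither signs nor positive scalars affect linear independence or fixed-space dimensions, so once this set-up is in place the two applications of Carter's Lemma together with the elementary identity $\mathsf{Mov}(w)=\mathrm{im}(w-1|_{V'})$ close the argument.
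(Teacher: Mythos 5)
Your argument is correct in the finite setting, which is the standing assumption of the section where this lemma appears, and it takes a genuinely different route from the paper. You prove the equality directly by showing both lengths equal $\dim\mathsf{Mov}(w)$: Carter's Lemma applied inside $W'$ (viewed with the root subsystem $\{\alpha_t \mid t\in T'\}$ spanning $V'$) gives $\dim\mathrm{im}(w-1|_{V'})=\ell_{T'}(w)$, the triviality of the $W'$-action on $(V')^{\perp}$ gives $\mathsf{Mov}(w)=\mathrm{im}(w-1|_{V'})$, and Carter's Lemma inside $W$ gives $\dim\mathsf{Mov}(w)=\ell_T(w)$. The paper instead argues by contradiction: it takes a $T'$-reduced expression, looks at the first prefix where $\ell_T$ and $\ell_{T'}$ diverge, and compares the rank of the parabolic closure $P(u)$ of the preceding prefix (which is $\ell_T(u)=i-1$) with the rank of the reflection subgroup generated by the first $i$ letters (which is $i$, again by Carter inside $W'$), so it leans on parabolic closures, the absolute order and Lemma~\ref{ref}, none of which you need. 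Both proofs share the one genuinely delicate ingredient, namely that Carter's Lemma may legitimately be applied to the reflection subgroup $W'$; your justification via the subsystem $\{\alpha\in\Phi \mid s_\alpha\in W'\}$ being a root system for $W'$ with simple system a basis of $V'$ is the standard fact for finite reflection groups (your attribution to Dyer's Corollary 3.11 really only covers $\mathrm{Ref}(W')=W'\cap T$, but nothing is lost since $W$ is finite here). Two small remarks: your route can be shortened further, since once Carter in $W'$ gives linear independence of $\alpha_{t_1},\dots,\alpha_{t_k}$, the ``if'' direction of Carter in $W$ already yields $\ell_T(w)=k$ without passing through moved-space dimensions; and, like the paper's proof, your argument is intrinsically Euclidean and so does not extend beyond finite $W$.
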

\begin{proof}
Since $T'\subseteq T$ we must have $\ell_{T}(w)\leq \ell_{T'}(w)$. Assume that $\ell_{T'}(w)=k'> k$. Let $(t_1, t_2,\dots, t_{k'})\in \mathrm{Red}_{T'}(w)$. Let $i$ be minimal such that $\ell_T(t_1 t_2\cdots t_i)\neq \ell_{T'}(t_1 t_2\cdots t_i)$. Then $\ell_T( t_1 t_2\cdots t_i)= i-2$, $\ell_{T'}(t_1 t_2\cdots t_i)= i$. Let $u= t_1 t_2\cdots t_{i-1}$. By minimality of $i$ we have $\ell_T(u)=\ell_{T'}(u)=i-1$. In particular, we have $t_i\leq_T u$. The parabolic closure $P(u)$ of $u$ therefore has rank $i-1$ and contains $t_1, \dots, t_{i-1}$ but also $t_i$. But since $(t_1, \dots, t_{k'})$ is $T'$-reduced, the reflection subgroup $W''=\langle t_1, t_2,\dots, t_i\rangle$ has rank $i$ (as a reflection subgroup of the Coxeter group $W'$, for instance by Lemma~\ref{carter}). But the reflections of $W''$ as a reflection subgroup of $W'$ or $W$ are the same, hence $W''$ also has rank $i$ as a reflection subgroup of $W$. Therefore $W''$ cannot be included in $P(u)$ which has smaller rank, a contradiction.     
\end{proof}

Hence together with Lemma~\ref{orbites} we get

\begin{cor}\label{cor:corresp}
For all $w\in W$, there is a one-to-one correspondence between $\mathcal{H}(w)$ and reflection subgroups of $W$ in which $w$ is a quasi-Coxeter element, given by $\mathcal{O}\mapsto \langle \mathcal{O} \rangle$. 
\end{cor}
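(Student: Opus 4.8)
The plan is to establish the correspondence by exhibiting its inverse and checking both directions are well-defined and mutually inverse. First I would note that the map $\mathcal{O}\mapsto\langle\mathcal{O}\rangle$ is well-defined on $\mathcal{H}(w)$ because the Hurwitz action leaves invariant the subgroup generated by a $T$-reduced expression, as recalled in the text. To see its image lands in the claimed target set, fix $\mathcal{O}\in\mathcal{H}(w)$ and set $W'=\langle\mathcal{O}\rangle$, $T'=W'\cap T$. As in the proof of Proposition~\ref{cor:decn}, any $(t_1,\dots,t_k)\in\mathcal{O}$ generates $W'$ and is automatically $T'$-reduced for $w$ viewed in $W'$ (here Lemma~\ref{lem:long} gives $\ell_{T'}(w)=\ell_T(w)=k$, so a length-$k$ expression of $w$ over $T'$ is $T'$-reduced); hence $w$ is a quasi-Coxeter element of $W'$. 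Thus the map sends $\mathcal{H}(w)$ into $\{W'\subseteq W\text{ reflection subgroup}~|~w\text{ is a quasi-Coxeter element of }W'\}$.

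Next I would define the candidate inverse: given a reflection subgroup $W'$ with $w$ a quasi-Coxeter element of $W'$, pick any $T'$-reduced expression $(t_1,\dots,t_k)$ of $w$ in $W'$ (with $T'=W'\cap T$); this generates $W'$ by the quasi-Coxeter hypothesis. Again by Lemma~\ref{lem:long}, $k=\ell_{T'}(w)=\ell_T(w)$, so $(t_1,\dots,t_k)$ is a genuine element of $\Red(w)$ and determines a Hurwitz orbit $\mathcal{O}\in\mathcal{H}(w)$. The key point to check is that this $\mathcal{O}$ does not depend on the chosen $T'$-reduced expression: since $w$ is a quasi-Coxeter element of $W'$, the Hurwitz action of $\mathcal{B}_k$ is transitive on $\mathrm{Red}_{T'}(w)$ by \cite[Theorem 1.1]{BGRW} applied inside $W'$, and this is a sub-action of the Hurwitz action on $\Red(w)$, so all such expressions lie in a single $\mathcal{O}$. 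Hence $W'\mapsto\mathcal{O}$ is well-defined.

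Finally I would verify the two compositions are identities. Starting from $\mathcal{O}$: we get $W'=\langle\mathcal{O}\rangle$, then any $(t_1,\dots,t_k)\in\mathcal{O}$ is a $T'$-reduced expression of $w$ in $W'$ and so is sent back to its Hurwitz orbit in $\Red(w)$, which is $\mathcal{O}$ itself. Starting from $W'$: we pick $(t_1,\dots,t_k)\in\mathrm{Red}_{T'}(w)$, land in $\mathcal{O}$, and then $\langle\mathcal{O}\rangle=\langle t_1,\dots,t_k\rangle=W'$, the last equality because $w$ is a quasi-Coxeter element of $W'$. Injectivity of $\mathcal{O}\mapsto\langle\mathcal{O}\rangle$ is then automatic, but in any case it is exactly Lemma~\ref{orbites}, which the corollary statement already invokes; I would cite it for the clean conclusion that distinct orbits give distinct subgroups.

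I do not anticipate a serious obstacle here: the statement is essentially a bookkeeping combination of \cite[Theorem 1.1]{BGRW} (transitivity of the Hurwitz action characterizing quasi-Coxeter elements, applied both in $W$ and in reflection subgroups $W'$) with Lemma~\ref{lem:long} (reflection length is unchanged on passing to a reflection subgroup containing the element) and Lemma~\ref{orbites}. The one spot requiring a little care is the compatibility of Hurwitz actions: one must observe that the $\mathcal{B}_k$-action on $\mathrm{Red}_{T'}(w)$ is literally the restriction of the $\mathcal{B}_k$-action on $\Red(w)$ to the invariant subset $\mathrm{Red}_{T'}(w)\subseteq\Red(w)$ (the inclusion being an equality of sets of length-$k$ expressions over $T'$, which is where Lemma~\ref{lem:long} enters), so that transitivity downstairs forces membership in a single orbit upstairs. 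Everything else is formal.
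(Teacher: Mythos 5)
Your proposal is correct and follows essentially the same route as the paper, which obtains the corollary immediately from Lemma~\ref{lem:long} (so that $T'$-reduced expressions in a reflection subgroup containing $w$ are genuine elements of $\Red(w)$), the observation in Proposition~\ref{cor:decn} that $w$ is quasi-Coxeter in $\langle\mathcal{O}\rangle$, and Lemma~\ref{orbites} for injectivity; your extra care about the Hurwitz action on $\mathrm{Red}_{T'}(w)$ being the restriction of the action on $\Red(w)$, and the use of \cite[Theorem 1.1]{BGRW} inside $W'$ to make the inverse well-defined, just spells out what the paper leaves implicit.
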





\begin{exple}
Let $W$ be of type $D_4$ with $S=\{ s_0, s_1, s_2, s_3\}$ where $s_2$ commutes with no other simple reflection. Then the element $$w= s_1 (s_2 s_1 s_2) (s_2 s_0 s_2) s_3$$ is a quasi-Coxeter element in $W$ (see~\cite[Example 2.4]{BGRW}), but it is not a Coxeter element (in the sense that it has no $T$-reduced expression yielding a simple system for $W$). It follows from Proposition~\ref{lemquasi} that its cycle decomposition is the trivial decomposition $x_1=w$. Now $W$ can be viewed as a reflection subgroup of a Coxeter group $\widetilde{W}$ of type $B_4$. In that case $W$ is not parabolic in $\widetilde{W}$, hence $w$ has no reduced expression generating a parabolic subgroup of $\widetilde{W}$: indeed, if there was such a decomposition, then the Hurwitz operation on the set of $T$-reduced expressions of $w$ in $\widetilde{W}$ would be transitive by \cite[Theorem 1.1]{BGRW}, hence each reduced expression would generate a parabolic subgroup, in particular $W$ would be parabolic in $\widetilde{W}$. The element $w$ has a unique generalized cycle decomposition in $W$ which is the one we gave above (since $W$ is irreducible), but $w$ can also be realized as a Coxeter element in a (non irreducible) reflection subgroup $W'$ of type $B_2\times B_2$ as follows: in the signed permutation model for the Weyl group $\widetilde{W}$ of type $B_4$ (see \cite[Section 8.1]{BjBr}), we have $w=(1,-2,-1,2)(3,4,-3,-4)$, which is a product of two Coxeter elements of the two type $B_2$ reflection subgroups consisting of those signed permutations supported on $\{\pm 1, \pm 2\}$ and $\{\pm 3, \pm 4\}$ respectively. In $W'$ the unique cycle decomposition of $w$ has two factors $x_1=(1,-2,-1,2)$, $x_2=(3,4,-3,-4)$. Note that neither $x_1$ nor $x_2$ lies in $W$.  
\end{exple}

\section*{Acknowledgments}
I thank Barbara Baumeister and Vivien Ripoll for helpful discussions.



\begin{thebibliography}{10}


\bibitem{Arm} D. ~Armstrong, \emph{Generalized noncrossing partitions and combinatorics of Coxeter groups}, Mem. Amer. Math. Soc. {\bf 202} (2009), No. 949.

\bibitem{BDSW} B.~Baumeister, M.~Dyer, C.~Stump, P.~Wegener, \textsl{A note on the transitive Hurwitz action on decompositions of parabolic Coxeter elements}, \emph{ Proc. Amer. Math. Soc. Ser. B} {\bf 1} (2014), 149-154.
  

\bibitem{BGRW} B.~Baumeister, T.~Gobet, K.~Roberts, P.~Wegener, \textsl{On
the Hurwitz action in finite Coxeter groups},
Journal of Group Theory (2016).

\bibitem{Dual} D.~Bessis, \textsl{The dual braid monoid}, Ann.\
Sci.\ \'Ecole Normale Sup\'erieure {\bf 36} (2003), 647-683.


\bibitem{BjBr} A.~Bj\"orner and F.~Brenti, \textsl{Combinatorics of Coxeter groups}, GTM 231, Springer, 2005.

\bibitem{Bou} N.~Bourbaki, \textsl{Groupes et alg\`ebres de Lie, chapitres
4,5
et 6}, Hermann (1968).

\bibitem{Brady} T.~Brady, \textsl{A partial order on the symmetric group and new $K(\pi,1)$'s for the braid groups}, Adv. Math. {\bf 161} (2001), no. 1, 20-40.



\bibitem{BWnonc} T.~Brady and C.~Watt, \textsl{Non-crossing partition lattices in finite real reflection groups}, Trans. Amer. Math. Soc. {\bf 360} (2008),  no. 4, 1983-2005.






\bibitem{Carter} R.W.~Carter, \textsl{Conjugacy Classes in the Weyl
group}, Compositio Math. {\bf 25} (1972), 1-59.

\bibitem{Dyer} M.J.~Dyer, \emph{Reflection subgroups of Coxeter systems},
J. of Algebra {\bf 135} (1990), Issue 1, 57-73.



\bibitem{Humph} J.~Humphreys, \textsl{Reflection groups and Coxeter groups}, Cambridge Studies in
Advanced Mathematics {\bf 29}, Cambridge University Press (1990).

\bibitem{IT} C.~Ingalls, H.~Thomas, \textsl{Noncrossing partitions and representations of quivers}, Compos. Math. {\bf 145} (2009), no. 6, 1533-1562.


\bibitem{Read} N.~Reading, \textsl{Clusters, Coxeter-sortable elements and noncrossing partitions}, Trans. Amer. Math. Soc. {\bf 359} (2007),  No. 12, 5931-5958.


\end{thebibliography}
\end{document}